\DeclareMathOperator{\RE}{Re}
\numberwithin{equation}{section}
\newtheorem{theorem}{Theorem}[section]
\newtheorem{lemma}[theorem]{Lemma}
\newtheorem{corollary}[theorem]{Corollary}
\theoremstyle{remark}
\newtheorem{example}[theorem]{Example}
\begin{document}
\author[SUBZAR BEIG]{SUBZAR BEIG}

\address{Department of Mathematics, University of Delhi,
Delhi--110 007, India}
\email{beighsubzar@gmail.com}
\author[V. Ravichandran]{V. Ravichandran}

\address{Department of Mathematics, University of Delhi,
Delhi--110 007, India}
\email{vravi68@gmail.com}

\title[Convolution of a harmonic mapping]{Convolution  of a harmonic mapping with $n$-starlike mappings and its partial sums}

\begin{abstract} We investigate the univalency and  the directional convexity of the convolution $\phi\tilde{*}f=\phi*h+\overline{\phi*g}$ of the harmonic mapping $f=h+\bar{g}$ with a mapping $\phi$ whose convolution with the mapping $z+\sum_{k=2}^{\infty}k^nz^k$ is starlike (and such a mapping $\phi$ is called $n$-starlike). In addition, we investigate the directional convexity of (i)  the convolution of an analytic  convex mapping with the slanted  half-plane mapping, and (ii)  the partial sums of the convolution of a $6$-starlike mapping with the harmonic Koebe mapping  and the harmonic half-plane mapping.
\end{abstract}

\keywords{harmonic mappings; $n$-starlike mappings; S\~{a}l\~agean operator; convolution; directional convexity. }
\subjclass[2010]{Primary: 31A05; Secondary: 30C45}
\thanks{The first author is supported by  a Junior Research Fellowship from UGC, New Delhi, India.}
\maketitle

\section{Introduction}
Let $\mathcal{H}$ consists of all complex-valued harmonic mappings $f=h+\bar{g}$ in the unit disk $\mathbb{D}:=\left\lbrace z\in\mathbb{C}, |z|<1\right\rbrace$, where $h$ and $g$ are analytic mappings. Let  $\mathcal{S}_H^0$ be the sub-class of $\mathcal{H}$ consists of all mappings $f$  in the class $\mathcal{H}$ that are univalent, sense-preserving and normalized by the conditions  $f(0)=f_{\bar{z}}(0)=f_z(0)-1=0$. Let $\mathcal{K}_H^0$ and $\mathcal{S}_H^{*0}$ denote  the sub-classes of $\mathcal{S}_H^0$ consisting of mappings  which maps $\mathbb{D}$ onto convex and starlike domains respectively. The sub-classes $\mathcal{S}$, $\mathcal{K}$ and $\mathcal{S}^*$ of analytic mappings consisting of univalent, convex and starlike mappings are respectively sub-classes of $\mathcal{S}_H^0$, $\mathcal{K}_H^0$ and $\mathcal{S}_H^{*0}$. Clunie and Sheil-Small\cite{1} constructed two important mappings, the harmonic right-half plane mapping $L \in \mathcal{K}_H^{*0} $ and the harmonic Koebe mapping $K\in{\mathcal{S}_H^0}$. These mappings are expected to play the role   of extremal mappings respectively in classes $\mathcal{S}_H^0$ and $\mathcal{K}_H^0$ as played by the analytic right-half plane mapping and analytic Koebe mapping respectively in the classes $\mathcal{K}$ and $\mathcal{S}^*$. These mappings $K=H+\overline{G}$ and $L=M+\overline{N}$ are defined in $\mathbb{D}$ by
 \[H(z)=\frac{z-z^2/2+z^3/6}{(1-z)^3},\quad G(z)=\frac{z^2/2+z^3/6}{(1-z)^3}\] and
\[M(z)=\frac{z-z^2/2}{(1-z)^2}, \quad N(z)=\frac{-z^2/2}{(1-z)^2}.\]
 A domain  $D$ is said to be \textit{convex in direction ${\theta}$} $(0\leq\theta<2\pi),$ if every line parallel to the line joining $0$ and  $\emph{e}^{\textit{i}\theta}$ lies completely inside or outside the domain $D$. If $\theta=0$ ( or $\pi/2$), such a domain $D$ is called convex in the direction of real (or imaginary) axis. In this paper we study the directional convexity of the convolution of these and some other mappings with $n$-starlike mapping introduced by S\~{a}l\~agean\cite{2} and their partial sums. Let $\mathcal{A}$ be the class of all analytic mappings $f:\mathbb{D}\longrightarrow\mathbb{C}$ with $f(0)=0$, and $ f'(0) = 1$. The function $f\in\mathcal{A}$ has the Taylor series expansion $f(z)=z+\sum_{k=2}^{\infty}a_kz^k$.
For the function $f \in \mathcal{A}$, $n\geq0$,  S\~{a}l\~{a}gean \cite{2}  defined the differential operator $\mathcal{D}^n:\mathcal{A}  \longrightarrow \mathcal{A}$ by
\[\mathcal{D}^nf(z)=z+\sum_{k=2}^{\infty}k^na_kz^k.\]
By using this operator,  S\~{a}l\~{a}gean introduced the class of $n$-starlike mappings of order $\alpha$ $(0\leq{\alpha<{1}})$ defined by\[\mathcal{S}_n(\alpha):= \bigg\{f\in \mathcal{A}: \RE{\frac{\mathcal{D}^{n+1}f(z)}{\mathcal{D}^{n}f(z)}>{\alpha},\quad z\in \mathbb{D} }\bigg\}.\]
Equivalently, a function $f\in\mathcal{A}$ is $n$-starlike of order $\alpha$ if and only if the function $\mathcal{D}^nf$ is starlike of order $\alpha$. Clearly  $\mathcal{S}^*(\alpha)= \mathcal{S}_0(\alpha)$ and $\mathcal{K}(\alpha)= \mathcal{S}_1(\alpha)$ are respectively the classes of starlike and convex mappings of order $\alpha$ introduced by Robertson\cite{21}. Denote $\mathcal{S}_n(0)$ by $\mathcal{S}_n$ and the mappings in this class are called $n$-starlike mappings. Also, $\mathcal{S}^*= \mathcal{S}_0$ and $\mathcal{K}= \mathcal{S}_1$.
Suppose $\phi$ and $\psi$ are analytic mappings on  $\mathbb{D} $ with  $\phi(z)=\sum_{n=1}^\infty a_nz^n$ and $ \psi(z)=\sum_{n=1}^\infty b_nz^n,$
their convolution $\phi *\psi$ is defined by $(\phi *\psi)(z):=\sum_{n=1}^\infty a_n b_nz^n.$ Convolution of harmonic mappings $f=h+\bar{g}$ and $F=H+\overline{G}$ is defined by $f*F:= h*H+ \overline{g*G}.$ Also,   the convolution $\tilde{*}$ of harmonic mapping $f=h+\bar{g}$ with analytic mapping $\phi$ is defined by$ f\tilde{*}\phi:=h*\phi+\overline{g*\phi}.$
It is well known that the convolution of two harmonic convex mappings  is not necessarily convex/univalent. In \cite{6}, Dorff studied the directional convexity of harmonic mappings and proved that convolution of two right half-plane mappings is univalent and convex in the direction of real axis provided the convolution is locally univalent. Later, Dorff \textit{et al.} \cite{7} extended such results to slanted half-plane and strip mappings. Other recent related work in this direction can be found in \cite{1,2,5,9,10,11,13,14,15,16,17,18,19,20,25}. For analytic convex mappings $\phi$, the convolution $K\tilde{*}\phi$ is not necessarily univalent. However, Nagpal and Ravichandran \cite{8} showed that $K\tilde{*}\phi\in\mathcal{S}_H^0$ and is convex in the direction of real axis, if $\phi$ is a $2$-starlike mapping.\

In Section 2, we prove that the convolution of certain harmonic mappings with $n$-starlike mappings is univalent and  convex in a particular direction. In particular, for $0\leq\alpha<\pi$, we prove that the convolution of an analytic  convex mapping with the slanted half-plane mapping is univalent and convex in the direction of $\pi/2-\alpha$. Lastly, in Section 3, we discuss the partial sums of $n$-starlike mappings and prove that all the partial sums of  $n$-starlike mappings with $n\geq 4$ are $(n-4)$-starlike. By using this, we prove that all the partial sums of the convolution of $6$-starlike mappings with the mappings $L$ and $K$ are univalent and convex in the direction of real-axis.

\section{Convolution of some harmonic mappings with $n$-starlike mappings }
We first give some convolution properties of $n$-starlike mappings, which will be useful throught the paper. From the  defnition of $\mathcal{S}_n(\alpha)$, one can easily see that\begin{equation}\label{eq0}
 f\in\mathcal{S}_n(\alpha)\Leftrightarrow\mathcal{D}^{n-m}f\in\mathcal{S}_m(\alpha).
  \end{equation}
Using this relation, we get the following result regarding the convolution of mappings in class $\mathcal{S}_n$.

 \begin{lemma}\label{theom1a}
Let $n + m\geq{1}$. If the function $f \in \mathcal{S}_n$ and the function $g\in \mathcal{S}_m$, then the convolution $f*g\in \mathcal{S}_{n+m-1}.$
 \end{lemma}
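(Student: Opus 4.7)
The plan is to reduce the claim to the classical Pólya--Schoenberg theorem of Ruscheweyh and Sheil-Small, which asserts that the Hadamard convolution of an analytic convex mapping with an analytic starlike mapping is starlike.

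The starting observation is the elementary coefficient identity
\[
\mathcal{D}^{a}f * \mathcal{D}^{b}g \;=\; z+\sum_{k=2}^{\infty} k^{a+b}\, a_k b_k\, z^k \;=\; \mathcal{D}^{a+b}(f*g),
\]
valid for any real exponents $a,b$. This lets me redistribute Sălăgean orders freely between the two factors of a convolution while keeping the total order fixed.

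Since $n+m\geq 1$, at least one of $n$, $m$ is positive, and by the symmetric roles of $f$ and $g$ in the conclusion I may assume $m\geq 1$. Choosing $a=n$ and $b=m-1\geq 0$, the identity above becomes
\[
\mathcal{D}^{n+m-1}(f*g) \;=\; \mathcal{D}^{n}f \,*\, \mathcal{D}^{m-1}g.
\]
By \eqref{eq0} the hypothesis $f\in\mathcal{S}_n$ gives $\mathcal{D}^n f\in\mathcal{S}_0=\mathcal{S}^*$, and the hypothesis $g\in\mathcal{S}_m$ gives $\mathcal{D}^{m-1}g\in\mathcal{S}_1=\mathcal{K}$. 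Applying the Ruscheweyh--Sheil-Small theorem to the right-hand side yields $\mathcal{D}^{n+m-1}(f*g)\in\mathcal{S}^*$, and a final use of \eqref{eq0} rewrites this as $f*g\in\mathcal{S}_{n+m-1}$.

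The only nontrivial ingredient is the Pólya--Schoenberg theorem, which I invoke as a known result; all remaining steps are coefficient bookkeeping for the Sălăgean operator combined with the translation rule \eqref{eq0}. I do not anticipate any real obstacle beyond correctly distributing the $n+m-1$ Sălăgean orders so that one factor ends up in $\mathcal{S}^*$ and the other in $\mathcal{K}$, which is exactly where the hypothesis $n+m\geq 1$ is used.
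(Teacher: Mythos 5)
Your proof is correct and follows essentially the same route as the paper: both use the identity $\mathcal{D}^{a}f*\mathcal{D}^{b}g=\mathcal{D}^{a+b}(f*g)$ together with \eqref{eq0} to split the total order so that one factor lands in $\mathcal{K}$ and the other in $\mathcal{S}^*$, and then invoke the P\'olya--Schoenberg (Ruscheweyh--Sheil-Small) convolution theorem. The only difference is cosmetic: the paper assumes $n\geq1$ and places convexity on the $f$-side via $\mathcal{D}^{n-1}f\in\mathcal{K}$, $\mathcal{D}^{m}g\in\mathcal{S}^*$, whereas you assume $m\geq1$ and do the mirror-image split.
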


 \begin{proof}
Assume that $n\geq1$.  Since the function $f \in \mathcal{S}_n$  and the function $g\in \mathcal{S}_m$,  by \eqref{eq0}, the function $\mathcal{D}^{n-1}f\in \mathcal{K}$  and  the function $\mathcal{D}^{m}g\in \mathcal{S}^*$.  Therefore, from \cite{22}, we have\[\mathcal{D}^{n+m-1}(f*g)=\mathcal{D}^{n-1}f*\mathcal{D}^{m}g\in\mathcal{S}^*.\] Hence, by \eqref{eq0}, it follows that the convolution $f*g\in \mathcal{S}_{n+m-1}$.
 \end{proof}

 \begin{theorem}\label{theomd}\cite{24}
If the function $f(z)=z+\sum_{n=2}^\infty a_n z^n\in \mathcal{A}$  satisfies the inequality  $\sum_{n=2}^\infty (n-\alpha)|a_n|\leq1-\alpha,$ then the function $f$ is starlike of order $\alpha.$
\end{theorem}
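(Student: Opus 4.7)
The plan is to verify the classical sufficient coefficient condition for starlikeness of order $\alpha$ by an elementary modulus estimate. Recall that $f\in\mathcal{A}$ is starlike of order $\alpha$ precisely when $\RE(zf'(z)/f(z))>\alpha$ in $\mathbb{D}$, and this inequality is equivalent (by a standard Möbius argument, since $w\mapsto(w-1)/(w+1-2\alpha)$ maps the half-plane $\RE w>\alpha$ onto $\mathbb{D}$) to
\[
\left|\frac{zf'(z)-f(z)}{zf'(z)+(1-2\alpha)f(z)}\right|<1,\qquad z\in\mathbb{D}.
\]
My first step would be to reduce the problem to verifying this single inequality.

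Next, I would expand both numerator and denominator as power series. A direct computation gives
\[
zf'(z)-f(z)=\sum_{n=2}^{\infty}(n-1)a_nz^n,
\]
\[
zf'(z)+(1-2\alpha)f(z)=(2-2\alpha)z+\sum_{n=2}^{\infty}(n+1-2\alpha)a_nz^n.
\]
Applying the triangle inequality upward on the numerator and downward on the denominator, the desired ratio is $<1$ provided
\[
\sum_{n=2}^{\infty}(n-1)|a_n||z|^{n}<(2-2\alpha)|z|-\sum_{n=2}^{\infty}(n+1-2\alpha)|a_n||z|^{n}.
\]

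Combining the two sums and dividing by $2|z|$, the condition simplifies to $\sum_{n=2}^{\infty}(n-\alpha)|a_n||z|^{n-1}<1-\alpha$. Since $|z|<1$ forces $|z|^{n-1}<1$ for each $n\geq2$, the hypothesis $\sum_{n=2}^{\infty}(n-\alpha)|a_n|\leq1-\alpha$ yields the required strict inequality. I would also note at the outset that this hypothesis implies absolute convergence on $\overline{\mathbb{D}}$, so the series manipulations are legitimate and the denominator is nonzero (its modulus is bounded below by a strictly positive quantity throughout $\mathbb{D}$, which is what lets the downward triangle inequality step go through without issue). The only mildly delicate point is ensuring that the denominator does not vanish; this follows from the same computation, since the lower bound on its modulus derived above is strictly positive for $0<|z|<1$ under the stated coefficient hypothesis.
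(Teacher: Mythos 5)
Your argument is correct. Note that the paper itself gives no proof of this statement: it is quoted from Silverman \cite{24}, whose original argument bounds $\left|zf'(z)/f(z)-1\right|$ by $1-\alpha$ using the same coefficient estimate; your version, which passes through the M\"obius characterization $\RE w>\alpha\Leftrightarrow|w-1|<|w+1-2\alpha|$ and compares $|zf'-f|$ with $|zf'+(1-2\alpha)f|$, is the standard equivalent route and the computation checks out (the two sums combine to $\sum_{n\ge2}2(n-\alpha)|a_n||z|^n<2(1-\alpha)|z|$, which holds strictly for $0<|z|<1$ under the hypothesis, and this same strict inequality is what forces the denominator, and hence $f(z)$, to be nonzero off the origin). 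The only cosmetic point is at $z=0$, where numerator and denominator both vanish; there the quotient $zf'/f$ extends with value $1>\alpha$, so nothing is lost.
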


By using \eqref{eq0} and Theorem \ref{theomd}, we get the following result.

\begin{theorem}\label{theomd2}\cite{24}
Let $0\leq\alpha<1$. If the function $f(z)=z+\sum_{n=2}^\infty a_n z^n$ in $\mathbb{D}$ satisfies the inequality  $\sum_{n=2}^\infty n^{m-1}(n-\alpha)|a_n|\leq1-\alpha,$ then the function $f$ is $(m-1)$-starlike of order $\alpha.$
\end{theorem}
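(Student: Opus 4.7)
The plan is to reduce the claim to Theorem \ref{theomd} via the Sălăgean operator. By the equivalence \eqref{eq0} (taking $n=m-1$, $m=0$), the function $f$ is $(m-1)$-starlike of order $\alpha$ if and only if $\mathcal{D}^{m-1}f \in \mathcal{S}^*(\alpha)$, so it suffices to show that $\mathcal{D}^{m-1}f$ is starlike of order $\alpha$.

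Next, I would compute the Taylor expansion of $\mathcal{D}^{m-1}f$. From the definition of the Sălăgean operator, one has
\[\mathcal{D}^{m-1}f(z)=z+\sum_{n=2}^\infty n^{m-1}a_n z^n,\]
so the coefficient of $z^n$ in $\mathcal{D}^{m-1}f$ is $b_n:=n^{m-1}a_n$ for $n\geq 2$. Note that $\mathcal{D}^{m-1}f \in \mathcal{A}$ (normalized with constant term $0$ and linear coefficient $1$), so Theorem \ref{theomd} is applicable.

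Now I apply Theorem \ref{theomd} to $\mathcal{D}^{m-1}f$: the sufficient condition there becomes $\sum_{n=2}^\infty (n-\alpha)|b_n|\leq 1-\alpha$, which in terms of the original coefficients reads
\[\sum_{n=2}^\infty (n-\alpha)\,n^{m-1}|a_n|=\sum_{n=2}^\infty n^{m-1}(n-\alpha)|a_n|\leq 1-\alpha.\]
This is precisely the hypothesis of the theorem, so $\mathcal{D}^{m-1}f \in \mathcal{S}^*(\alpha)$, and the conclusion follows from the first step. There is no real obstacle in this argument; it is a two-line bookkeeping proof once \eqref{eq0} has been identified as the right tool to shift the problem from $\mathcal{S}_{m-1}(\alpha)$ down to $\mathcal{S}^*(\alpha)$, after which Theorem \ref{theomd} applies verbatim to the transformed series.
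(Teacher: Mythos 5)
Your argument is correct and is exactly the route the paper indicates: apply $\mathcal{D}^{m-1}$, use the coefficient criterion of Theorem \ref{theomd} on the transformed series, and transfer back via \eqref{eq0}. Nothing further is needed.
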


 The harmonic mappings $f$ considered in this paper are assumed to be normalized by$f(0)=f_{z}(0)-1=f_{\bar{z}}(0)=0$, unless otherwise specified. The following result due to Clunie and Sheil-Small is used for constructing univalent harmonic mappings convex in a given direction.

\begin{lemma}\label{lem130}\cite{3}
A locally univalent harmonic mapping $f=h+\overline{g}$ on $\mathbb{D}$ is univalent and maps $\mathbb{D}$ onto a domain convex in the direction of $\phi$ if and only if the analytic mapping $h-\emph{e}^{2\textit{i}\phi}g$ is univalent and maps $\mathbb{D}$ onto a  domain convex in the direction of $\phi$.
\end{lemma}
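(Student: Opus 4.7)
The plan is to reduce the statement to the case $\phi=0$ (the direction of the real axis) by rotation, and then prove the direction-zero case by analyzing the level sets of $\IM f$. A domain $D$ is convex in direction $\phi$ iff $e^{-i\phi}D$ is convex in direction $0$, and under the rotation $f\mapsto e^{-i\phi}f$ the decomposition becomes $(e^{-i\phi}h)+\overline{(e^{i\phi}g)}$, while the analytic map $h-e^{2i\phi}g$ transforms as $e^{-i\phi}(h-e^{2i\phi}g)=e^{-i\phi}h-e^{i\phi}g$. Thus without loss of generality I may assume $\phi=0$ and prove: for a locally univalent sense-preserving $f=h+\bar g$, $f$ is univalent with $f(\mathbb{D})$ convex in the direction of the real axis if and only if $F:=h-g$ is analytic and univalent with $F(\mathbb{D})$ convex in the same direction.

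The crux is the identity $\IM F=\IM h-\IM g=\IM h+\IM\bar g=\IM f$, so the level sets $S_t:=\{z\in\mathbb{D}:\IM f(z)=t\}$ coincide with those of $\IM F$. For the direction $(\Leftarrow)$ I would use the univalence of $F$ and convexity of $F(\mathbb{D})$ to write each $S_t$ as a connected arc (the $F$-preimage of a horizontal segment in $F(\mathbb{D})$), parametrize it by $s=\RE F(z(s))$, and differentiate $\RE f=\RE F+2\RE g$ along the arc using $F'(z(s))z'(s)=1$. A short simplification yields
\[
\frac{d\,\RE f}{ds}=\RE\!\left(\frac{1+\omega(z)}{1-\omega(z)}\right),\qquad \omega:=g'/h'.
\]
Since $f$ is locally univalent and sense-preserving, $|\omega|<1$ on $\mathbb{D}$, and the Cayley transform $w\mapsto(1+w)/(1-w)$ sends the unit disk into the right half-plane, so this derivative is strictly positive. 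Hence $f$ maps each $S_t$ monotonically, and therefore injectively, onto a horizontal segment at height $t$; taking the union over $t$ shows $f$ is univalent with $f(\mathbb{D})$ convex in the direction of the real axis.

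For $(\Rightarrow)$ I would run the symmetric argument with parametrization $s=\RE f(z(s))$. Now $z'(s)$ satisfies the harmonic identity $h'(z)z'(s)+\overline{g'(z)}\,\overline{z'(s)}=1$; setting $u:=h'z'$ this becomes $u+\bar\omega\bar u=1$, and separating real and imaginary parts forces $(1-\omega)u$ to be real. A short computation then gives
\[
\frac{d\,\RE F}{ds}=(1-\omega)u=\frac{|1-\omega(z)|^2}{1-|\omega(z)|^2}>0,
\]
so $F$ maps each $S_t$ injectively onto a horizontal segment at height $t$, and $F$ is univalent with $F(\mathbb{D})$ convex in the direction of the real axis. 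I expect the main obstacle to be precisely this second computation: because the parametrization is by the harmonic coordinate $\RE f$ rather than the analytic coordinate $\RE F$, the tangent $z'(s)$ is governed by a real-linear rather than complex-linear equation, and extracting the clean positive expression $|1-\omega|^2/(1-|\omega|^2)$ needs a careful real/imaginary-part analysis. Otherwise both directions rest entirely on the dilatation bound $|\omega|<1$ coming from local univalence and sense-preservation.
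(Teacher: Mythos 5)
This lemma is not proved in the paper at all: it is quoted from Clunie and Sheil-Small \cite{3}, so there is no internal proof to compare with, and your argument is essentially the standard shear-construction proof from that source (rotate to the real-axis case, use $\IM(h-g)=\IM f$, and show strict monotonicity of the appropriate real part along each level arc); both key computations, $\tfrac{d}{ds}\RE f=\RE\bigl(\tfrac{1+\omega}{1-\omega}\bigr)$ in one direction and $\tfrac{d}{ds}\RE F=\lvert 1-\omega\rvert^{2}/(1-\lvert\omega\rvert^{2})$ in the other, are correct, as is the bookkeeping that identifies the horizontal cross-sections of the image with the images of the level arcs. The only slight mismatch with the statement is that you assume $f$ is sense-preserving while the lemma asks only for local univalence; this is harmless, since in the sense-reversing case $\lvert\omega\rvert>1$ makes the same derivatives strictly negative (or one applies the result to $\bar f$), and strict monotonicity is all the argument uses --- moreover, in this paper the lemma is only invoked after the dilatation has been shown to satisfy $\lvert\omega\rvert<1$.
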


\begin{lemma} \label{lem 13a}
Let the function $ f= h+ \bar {g} $ be harmonic and  the function $\phi$ be analytic in $\mathbb{D}$. If  the function $ (h-e^{-2\textit{i}\beta}g)*\phi$ is convex  and,  for some real number $\gamma$,
\begin{equation}\label{eq11}
\RE\frac{(\phi*h)'(z)}{\left((\phi*h)'-e^{-2\textit{i}\gamma}(\phi*g)'\right)(z)}>\frac{1}{2}  \quad \text {for z} \in \mathbb{D},
\end{equation}
then the convolution $f\tilde{\ast}\phi \in \mathcal{S}_H^0$ and is convex in the  direction of $-\beta$.
\end{lemma}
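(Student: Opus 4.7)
The plan is to reduce the two hypotheses to the standard input of the Clunie--Sheil-Small shearing criterion (Lemma \ref{lem130}), applied with direction $-\beta$. Writing $F:=f\tilde{*}\phi=(\phi*h)+\overline{\phi*g}$, I need to extract from the hypotheses (a) local univalence and sense-preservation of $F$ in $\mathbb{D}$, and (b) univalence together with convexity in the direction $-\beta$ of the analytic shear $(\phi*h)-e^{-2i\beta}(\phi*g)$.

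For part (a) I would recast \eqref{eq11} as a dilatation bound. Setting $w=(\phi*h)'(z)$ and $v=e^{-2i\gamma}(\phi*g)'(z)$ and using the identity $|a-1|^{2}=|a|^{2}-2\RE a+1$ with $a=w/(w-v)$, a short manipulation gives
\[
\RE\frac{w}{w-v}>\frac{1}{2}\ \Longleftrightarrow\ |a-1|<|a|\ \Longleftrightarrow\ |v|<|w|.
\]
Thus \eqref{eq11} is equivalent to $|(\phi*g)'(z)|<|(\phi*h)'(z)|$ throughout $\mathbb{D}$, which is exactly the statement that the dilatation $\omega=(\phi*g)'/(\phi*h)'$ of the harmonic mapping $F$ has modulus strictly less than one. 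In particular $(\phi*h)'$ does not vanish and $F$ is locally univalent and sense-preserving. Note that the auxiliary parameter $\gamma$ drops out entirely at this stage, since rotating $(\phi*g)'$ by the unimodular factor $e^{-2i\gamma}$ leaves the absolute value unchanged.

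For part (b), the bilinearity of convolution gives
\[
(\phi*h)-e^{-2i\beta}(\phi*g)=\phi*\bigl(h-e^{-2i\beta}g\bigr),
\]
which is convex in $\mathbb{D}$ by hypothesis; a convex univalent mapping sends $\mathbb{D}$ onto a convex domain, and every convex domain is convex in every direction, so in particular this analytic shear is univalent and maps $\mathbb{D}$ onto a domain convex in the direction $-\beta$. Combining (a) and (b), Lemma \ref{lem130} with its direction parameter taken to be $-\beta$ yields that $F$ is univalent and maps $\mathbb{D}$ onto a domain convex in the direction $-\beta$. The standard normalizations of $h$, $g$, and $\phi$ transfer coefficient-wise through convolution to $F$, placing $F$ in $\mathcal{S}_{H}^{0}$. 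The only step requiring real care is the algebraic equivalence in (a); once \eqref{eq11} is recognized as the dilatation bound for $F$, the rest of the argument is an immediate appeal to the shearing theorem.
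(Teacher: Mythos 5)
Your proof is correct and follows essentially the same route as the paper: reduce via the Clunie--Sheil-Small shearing lemma (Lemma \ref{lem130}) using the hypothesis that $\phi*(h-e^{-2\textit{i}\beta}g)$ is convex, and then show \eqref{eq11} forces the dilatation of $f\tilde{*}\phi$ to have modulus less than one. The only cosmetic difference is that you verify the dilatation bound by the direct equivalence $\RE\frac{w}{w-v}>\frac12\Leftrightarrow|v|<|w|$ (which makes the irrelevance of $\gamma$ explicit), whereas the paper passes to the rotated mapping $e^{\textit{i}\gamma}f\tilde{*}\phi$ and checks $\RE\frac{1+\omega}{1-\omega}>0$ before invoking Lewy's theorem.
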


\begin{proof}
Since the function $(h-e^{-2\textit{i}\beta}g)*\phi$ is convex and hence convex in the direction of $-\beta$, in view of Lemma \ref{lem130}, it is enough to prove that the mapping $f\tilde{\ast}\phi$ is locally univalent. Clearly \eqref{eq11} shows that $(h*\phi)'(z)\neq0$ for $z\in \mathbb{D}$. Therefore, using \eqref{eq11}, we see that the dilatation $w_{e^{\textit{i}\gamma}f\tilde{\ast}\phi}=(e^{-\textit{i}\gamma}g*\phi)'/(e^{\textit{i}\gamma}h*\phi)'$ of $e^{\textit{i}\gamma}f\tilde{\ast}\phi$ satisfies \begin{align}
\RE\frac{1+w_{e^{\textit{i}\gamma}f\tilde{\ast}\phi}(z)}{1-w_{e^{\textit{i}\gamma}f\tilde{\ast}\phi}(z)}&= \RE\frac{(e^{\textit{i}\gamma}h*\phi)'(z)+(e^{-\textit{i}\gamma}g*\phi)'(z)}{(e^{\textit{i}\gamma}h*\phi)'(z)-(e^{-\textit{i}\gamma}g*\phi)'(z)}\notag\\&=2\RE\frac{(\phi*h)'(z)}{\left((\phi*h)'-e^{-2\textit{i}\gamma}(\phi*g)'\right)(z)}-1>\frac{1}{2}, \quad z\in \mathbb{D}.\label{eq12}
\end{align}This shows that $|w_{e^{\textit{i}\gamma}f\tilde{\ast}\phi}(z)|<1$ for $z\in\mathbb{D}$, or equivalently $|w_{f\tilde{\ast}\phi}(z)|<1$ for $z\in\mathbb{D}$, where $w_{f\tilde{\ast}\phi}=(g*\phi)'/(h*\phi)'$ is dilatation of the function $f\tilde{\ast}\phi$. The result now follows by Lewis theorem.
\end{proof}

\begin{theorem}\label{theom15a}
Let the  harmonic  function $ f= h+ \bar {g} $  in $\mathbb{D}$ satisfies $h(z)- e^{-2\textit{i}\gamma}g(z)=z$ for some real number $\gamma$. If the function $h*\phi\in\mathcal{S}_2$  and the function $(h-e^{-2\textit{i}\beta}g)*\phi\in\mathcal{K}$ for some analytic function $\phi$, then the convolution $f\tilde{*}\phi\in\mathcal{S}_H^0$ and is convex in the direction  of $-\beta$.
\end{theorem}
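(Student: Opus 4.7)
The plan is to apply Lemma~\ref{lem 13a} with the same real number $\gamma$ that appears in the normalization $h(z)-e^{-2\textit{i}\gamma}g(z)=z$ and with the given $\beta$. The second hypothesis $(h-e^{-2\textit{i}\beta}g)*\phi \in \mathcal{K}$ delivers the convexity requirement of Lemma~\ref{lem 13a} verbatim, so everything reduces to verifying the inequality~\eqref{eq11} for this $\gamma$.

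First I would exploit the hypothesis $h-e^{-2\textit{i}\gamma}g = z$ to trivialize the denominator of~\eqref{eq11}. Since $h*\phi\in\mathcal{S}_2\subset\mathcal{A}$ and $h\in\mathcal{A}$, the coefficient comparison forces $\phi\in\mathcal{A}$; convolving both sides of $h-e^{-2\textit{i}\gamma}g=z$ with $\phi$ therefore gives $(h*\phi)(z)-e^{-2\textit{i}\gamma}(g*\phi)(z) = z*\phi(z) = z$. Differentiating, $\bigl((\phi*h)'-e^{-2\textit{i}\gamma}(\phi*g)'\bigr)(z)\equiv 1$, so \eqref{eq11} collapses to the single-function inequality
\[
\RE(h*\phi)'(z) > \frac{1}{2}, \qquad z\in\mathbb{D}.
\]

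Second, I would deduce this inequality from $h*\phi \in \mathcal{S}_2$. By \eqref{eq0}, $h*\phi\in\mathcal{S}_2$ is equivalent to $\mathcal{D}(h*\phi)\in\mathcal{S}_1=\mathcal{K}$; that is, the normalized mapping $F(z):=z(h*\phi)'(z)$ is convex on $\mathbb{D}$. The classical Marx--Strohh\"acker-type consequence of convexity then gives $\RE(F(z)/z)>1/2$, which is precisely $\RE(h*\phi)'(z)>1/2$. Lemma~\ref{lem 13a} now yields both $f\tilde{*}\phi\in\mathcal{S}_H^0$ and its convexity in the direction $-\beta$.

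There is no serious analytic obstacle here; the only point that requires care is not to conflate the two independent real parameters: $\gamma$, which controls the algebraic normalization that trivializes the denominator in~\eqref{eq11}, and $\beta$, which governs the geometric direction of convexity and is supplied entirely by the second hypothesis on $(h-e^{-2\textit{i}\beta}g)*\phi$.
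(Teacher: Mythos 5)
Your proposal is correct and follows essentially the same route as the paper: reduce to Lemma~\ref{lem 13a} by noting that convolving $h-e^{-2\textit{i}\gamma}g=z$ with $\phi$ makes the denominator in \eqref{eq11} identically $1$, then obtain $\RE(h*\phi)'>1/2$ from $z(h*\phi)'\in\mathcal{K}$ via the Marx--Strohh\"acker-type bound (the paper cites Duren's Corollary~1, p.~251 for exactly this step). Your write-up in fact makes explicit the normalization/convolution step that the paper leaves implicit, but the argument is the same.
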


\begin{proof}
 Since the function $h*\phi\in\mathcal{S}_2$, we have $z(h*\phi)'\in\mathcal{K}$. Hence,  from \cite[Corollary 1, p.251]{23}, we get \[\RE\frac{(\phi*h)'(z)}{\left((\phi*h)'-e^{-2\textit{i}\gamma}(\phi*g)'\right)(z)}=\RE(h*\phi)'(z)>\frac{1}{2} \quad\text{for }z\in\mathbb{D}.\] Also, the function $(h-e^{-2\textit{i}\beta}g)*\phi$ is convex. The result now follows from Lemma \ref{lem 13a}.
\end{proof}

\begin{corollary}\label{corl15b}
If the function $\phi(z)=z+\sum_{n=2}^\infty a_nz^n\in\mathcal{S}_2$ satisfies the inequality $\sum_{n=2}^\infty n^2|a_n|\leq 1/\sqrt{2(1-\cos2\theta)}$ for some $\theta \in [0,\pi/2]$, then, for real number $\gamma$, the function $\phi+\overline{e^{2\textit{i}\gamma}(\phi-z)}\in\mathcal{S}_H^0$ and is convex in the direction  $-\beta$ for all  $\beta$  satisfying $|\beta+\gamma|\leq\theta$.
\end{corollary}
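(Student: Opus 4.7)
The plan is to apply Theorem \ref{theom15a} to the harmonic mapping $f := h + \bar{g}$ obtained by choosing $h$ and $g$ to be ``identity-type'' members of $\mathcal{A}$, namely
\[ h(z) := \frac{z}{1-z}, \qquad g(z) := e^{2\textit{i}\gamma}\frac{z^2}{1-z}. \]
Since $z/(1-z)$ is the identity for the Hadamard product on $\mathcal{A}$, one immediately obtains $h*\phi = \phi$ and $g*\phi = e^{2\textit{i}\gamma}(\phi - z)$, and therefore $f\tilde{*}\phi = \phi + \overline{e^{2\textit{i}\gamma}(\phi - z)}$, which is exactly the mapping in the statement. The normalization of $f$ is built into the choice, and the identity $h(z) - e^{-2\textit{i}\gamma}g(z) = (z - z^2)/(1-z) = z$, together with the given $\phi \in \mathcal{S}_2$ (so that $h*\phi = \phi \in \mathcal{S}_2$), disposes of two of the three hypotheses of Theorem \ref{theom15a}.

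The last hypothesis to verify is that $(h - e^{-2\textit{i}\beta}g) * \phi$ is convex. Expanding the convolution yields
\[ (h - e^{-2\textit{i}\beta}g) * \phi = \phi - e^{2\textit{i}(\gamma - \beta)}(\phi - z) = z + \bigl(1 - e^{2\textit{i}(\gamma-\beta)}\bigr) \sum_{n=2}^\infty a_n z^n. \]
Now Theorem \ref{theomd2} with $m = 2$ and $\alpha = 0$ provides the classical coefficient sufficient condition $\sum_{n=2}^\infty n^2|c_n| \leq 1$ for convexity of $z + \sum c_n z^n$, which reduces the problem to the single scalar inequality
\[ |1 - e^{2\textit{i}(\gamma-\beta)}| \sum_{n=2}^\infty n^2 |a_n| \leq 1. \]

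To close, I would use the trigonometric identity $|1 - e^{2\textit{i}\alpha}| = \sqrt{2(1 - \cos 2\alpha)}$ together with the monotonicity of $|1 - e^{2\textit{i}\alpha}|$ in $|\alpha|$ on $[0, \pi/2]$: for $\beta$ in the stated range we obtain $|1 - e^{2\textit{i}(\gamma - \beta)}| \leq \sqrt{2(1 - \cos 2\theta)}$, so the displayed inequality becomes exactly the standing hypothesis $\sum_{n=2}^\infty n^2 |a_n| \leq 1/\sqrt{2(1-\cos 2\theta)}$, completing the verification of the hypotheses of Theorem \ref{theom15a}. I expect the main conceptual step to be the initial recognition of the right $h$ and $g$ that ``reverse-engineer'' the target convolution; once those are in hand, the rest is coefficient bookkeeping plus one trigonometric monotonicity estimate, and I anticipate no genuine obstacle beyond that.
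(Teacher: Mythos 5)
Your construction is exactly the paper's: the same shear pair $h(z)=z/(1-z)$, $g(z)=e^{2i\gamma}\,z^2/(1-z)$ (so that $h*\phi=\phi$, $g*\phi=e^{2i\gamma}(\phi-z)$ and $h-e^{-2i\gamma}g=z$), the same reduction to Theorem \ref{theom15a}, and the same coefficient criterion (Theorem \ref{theomd2} with $m=2$, $\alpha=0$) applied to $(h-e^{-2i\beta}g)*\phi$. Your expansion $(h-e^{-2i\beta}g)*\phi=\phi-e^{2i(\gamma-\beta)}(\phi-z)$ is moreover the correct algebra; the paper's own proof writes this factor as $1-e^{-2i(\beta+\gamma)}$, which rests on the false identity $e^{-2i\beta}e^{2i\gamma}=e^{-2i(\beta+\gamma)}$.

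The gap is in your closing step. You must verify $|1-e^{2i(\gamma-\beta)}|\sum_{n\ge2}n^2|a_n|\le 1$, and $|1-e^{2i(\gamma-\beta)}|=\sqrt{2(1-\cos 2(\beta-\gamma))}=2|\sin(\beta-\gamma)|$ is controlled by $|\beta-\gamma|$, not by $|\beta+\gamma|$; the stated range $|\beta+\gamma|\le\theta$ gives no bound on it. Concretely, for $\gamma=\pi/4$ and $\beta=-\pi/4$ one has $|\beta+\gamma|=0\le\theta$ while $|1-e^{2i(\gamma-\beta)}|=|1-e^{i\pi}|=2$, so your asserted inequality $|1-e^{2i(\gamma-\beta)}|\le\sqrt{2(1-\cos2\theta)}$ fails and the hypothesis $\sum n^2|a_n|\le 1/\sqrt{2(1-\cos2\theta)}$ does not close the argument. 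What your computation actually proves is convexity in the direction $-\beta$ for all $\beta$ with $|\beta-\gamma|\le\theta$; that is evidently the intended statement, the printed condition $|\beta+\gamma|\le\theta$ being consistent only with the paper's sign slip (equivalently, with taking the co-analytic part $\overline{e^{-2i\gamma}(\phi-z)}$). So, as a proof of the corollary as literally printed, your argument (like the paper's) does not go through at this one step; with the range read as $|\beta-\gamma|\le\theta$, your argument is complete and coincides with the paper's.
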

\begin{proof}
The harmonic mapping $f=h+\bar g$, with $h(z)=z/(1-z)$ and $g(z)=e^{2\textit{i}\gamma}(z/(1-z)-z)$, satisfy $h(z)-e^{-2\textit{i}\gamma}g(z)=z$. Also, the function $h*\phi=\phi\in\mathcal{S}_2.$  Furthermore, we see that
\begin{align*}
 (h(z)- e^{-2\textit{i}\beta}g(z))*\phi(z)&=\phi(z)-e^{-2\textit{i}(\beta+\gamma)}(\phi(z)-z)\\&=z+\sum_{n=2}^\infty(1-e^{-2\textit{i}(\beta+\gamma)})a_nz^n
\end{align*}
Now, for $|\beta+\gamma|\leq\theta$, we have
\begin{align*}
\sum_{n=2}^\infty n^2|(1-e^{-2\textit{i}(\beta+\gamma)})a_n|&=|1-e^{-2\textit{i}(\beta+\gamma)}|\sum_{n=2}^\infty n^2|a_n|\\&=\sqrt{2(1-\cos2(\beta+\gamma))}\sum_{n=2}^\infty n^2|a_n|\\&\leq\sqrt{2(1-\cos2(\beta+\gamma))}\frac{1}{\sqrt{2(1-\cos2\theta)}}\leq1.
\end{align*}
 Therefore, by Lemma \ref{theomd2}, the function $(h- e^{-2\textit{i}\beta}g)*\phi$ is convex for every $\beta$ such that $|\beta+\gamma|\leq\theta$. Hence, by Theorem \ref{theom15a}, the function $f\tilde{*}\phi=\phi+\overline{e^{2\textit{i}\gamma}(\phi-z)}\in\mathcal{S}_H^0$ and is convex in every direction $-\beta$ satisfying $|\beta+\gamma|\leq\theta$.
\end{proof}
\remark\label{remaka1} For $\theta=\pi/2$ in Corollary \ref{corl15b}, we see the function $\phi+\overline{e^{2\textit{i}\gamma}(\phi-z)}\in\mathcal{K}_H^0$, if the function  $\phi=z+\sum_{n=2}^\infty a_nz^n\in\mathcal{S}_2$ and satisfies the inequality $\sum_{n=2}^\infty n^2|a_n|\leq 1/2.$
\remark\label{remaka2}If the function  $\phi=z+\sum_{n=2}^\infty a_nz^n$ satisfies the inequality $\sum_{n=2}^\infty n^3|a_n|\leq 1$, then $\sum_{n=2}^\infty n^2|a_n|\leq 1/2$ and, by Lemma \ref{theomd2}, the function $\phi \in \mathcal{S}_2$. Therefore, Remark \ref{remaka1} shows that $\phi+\overline{e^{2\textit{i}\gamma}(\phi-z)}\in\mathcal{K}_H^0$.
\begin{theorem}\label{theom16a}Let the function $\phi\in\mathcal{S}_2$ and the function $f= h+ \bar {g} $ be  a harmonic mapping in $\mathbb{D}$ satisfying $h(z)- e^{-2\textit{i}\gamma}g(z)=h(z)*\log(1/(1-z))$ for some real number $\gamma$ with the function $h\in\mathcal{S}^*$. Then the convolution $f\tilde{*}\phi\in\mathcal{S}_H^0$ and is convex in the direction $-\gamma$. Furthermore, if for any $\beta$ real, the function $h-e^{-2\textit{i}\beta}g\in\mathcal{S}^*$, then the convolution $f\tilde{*}\phi$ is convex in the direction $-\beta$.
\end{theorem}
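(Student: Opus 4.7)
The plan is to apply Lemma~\ref{lem 13a} twice: first with $\beta=\gamma$ to obtain the main assertion, and then once more with the general $\beta$ for the final assertion. Set $H:=h-e^{-2\textit{i}\gamma}g$, so the hypothesis reads $H(z)=h(z)*\log(1/(1-z))$. Comparing coefficients this is precisely $zH'(z)=h(z)$, and since $h\in\mathcal{S}^{\ast}$, Alexander's theorem yields $H\in\mathcal{K}$.

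To verify the inequality \eqref{eq11} with this $\gamma$, I would exploit the standard convolution identity $z(F*G)'(z)=F(z)*(zG'(z))$. Applied to $F=\phi$, $G=H$ it gives
\[
z(\phi*H)'(z)=\phi(z)*(zH'(z))=\phi(z)*h(z)=(\phi*h)(z),
\]
so that $(\phi*h)'(z)-e^{-2\textit{i}\gamma}(\phi*g)'(z)=(\phi*H)'(z)=(\phi*h)(z)/z$, and therefore
\[
\frac{(\phi*h)'(z)}{(\phi*h)'(z)-e^{-2\textit{i}\gamma}(\phi*g)'(z)}=\frac{z(\phi*h)'(z)}{(\phi*h)(z)}.
\]
Because $\phi\in\mathcal{S}_2$ and $h\in\mathcal{S}^{\ast}=\mathcal{S}_0$, Lemma~\ref{theom1a} (with $n=2$, $m=0$) places $\phi*h$ in $\mathcal{S}_1=\mathcal{K}$. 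A convex mapping is starlike of order $1/2$ by the Marx--Strohh\"acker theorem, so the right-hand side has real part exceeding $1/2$, which is \eqref{eq11}.

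For the convexity hypothesis of Lemma~\ref{lem 13a} with $\beta=\gamma$, note that $(h-e^{-2\textit{i}\gamma}g)*\phi=H*\phi$ with $H\in\mathcal{K}=\mathcal{S}_1$ and $\phi\in\mathcal{S}_2$, so Lemma~\ref{theom1a} lands the convolution in $\mathcal{S}_{2+1-1}=\mathcal{S}_2\subset\mathcal{K}$. Lemma~\ref{lem 13a} then gives $f\tilde{*}\phi\in\mathcal{S}_H^0$ convex in the direction $-\gamma$. For the last sentence only the convexity hypothesis must be rechecked, since \eqref{eq11} was established for the original $\gamma$ and does not involve $\beta$; the assumption $h-e^{-2\textit{i}\beta}g\in\mathcal{S}^{\ast}=\mathcal{S}_0$ together with $\phi\in\mathcal{S}_2$ yields $(h-e^{-2\textit{i}\beta}g)*\phi\in\mathcal{S}_1=\mathcal{K}$ by Lemma~\ref{theom1a}, and a second application of Lemma~\ref{lem 13a} gives the convexity in the direction $-\beta$.

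The only mildly delicate point is translating the hypothesis $H=h*\log(1/(1-z))$ into the Alexander-type relation $zH'=h$; after that the proof is a mechanical chain of applications of Lemma~\ref{theom1a}, Alexander's theorem, Marx--Strohh\"acker, and Lemma~\ref{lem 13a}, so I do not expect a genuine obstacle.
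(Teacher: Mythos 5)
Your proposal is correct, and for the key inequality it takes a genuinely shorter route than the paper. The paper first proves $\RE\bigl(h(z)/(h(z)*\log\frac{1}{1-z})\bigr)>\tfrac12$ from convexity of $h*\log\frac{1}{1-z}$, then writes the quotient in \eqref{eq11} in the factored form \eqref{eq16c} with $z\phi'\in\mathcal{K}$ and $h*\log\frac{1}{1-z}\in\mathcal{S}^*$, and invokes the Ruscheweyh--Sheil-Small--type convolution theorem (Theorem 2.4 of \cite{8}) to transfer the half-plane condition through the convolution. You instead exploit the Alexander relation $zH'(z)=h(z)$ for $H=h-e^{-2\textit{i}\gamma}g$ together with $z(\phi*H)'(z)=\phi(z)*zH'(z)=(\phi*h)(z)$, which collapses the quotient in \eqref{eq11} to $z(\phi*h)'(z)/(\phi*h)(z)$; since $\phi*h\in\mathcal{S}_1=\mathcal{K}$ by Lemma~\ref{theom1a} and convex maps are starlike of order $\tfrac12$ (Marx--Strohh\"acker, which is the same Duren corollary the paper cites for \eqref{eq16b}), the inequality follows with no appeal to the convolution theorem of \cite{8}. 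The convexity hypotheses of Lemma~\ref{lem 13a} are verified exactly as in the paper, via Lemma~\ref{theom1a}; your step $H*\phi\in\mathcal{S}_2\subset\mathcal{K}$ uses the standard containment $\mathcal{S}_{n+1}\subset\mathcal{S}_n$ (immediate from $\mathcal{K}\subset\mathcal{S}^*$ applied to $\mathcal{D}^n$), or you could note directly that $z(H*\phi)'=h*\phi\in\mathcal{S}^*$ gives $H*\phi\in\mathcal{K}$ by Alexander's theorem. Net effect: your argument is a little more elementary at this point, while the paper's factorization technique is the template it reuses in the more general Theorem~\ref{theom4a}, where no such collapse is available.
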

\begin{proof}
Since the function $h\in\mathcal{S}^*$ and the function $\log(1/(1-z))\in\mathcal{S}_2$, by Lemma \ref{theom1a}, we have the function $h-e^{-2\textit{i}\gamma}g=h*\log1/(1-z)\in\mathcal{K}$. Hence, from \cite[Corollary 1, p.251]{23}, we have
 \begin{equation}\label{eq16b}
\RE\frac{h(z)}{h(z)*\log\frac{1}{1-z}}>\frac{1}{2}, \quad z \in\mathbb{D}.
\end{equation}
Note that, we can write
\begin{equation}\label{eq16c}
\RE\frac{(\phi*h)'(z)}{\left((\phi*h)'-e^{-2\textit{i}\gamma}(\phi*g)'\right)(z)} =\RE\frac{z\phi'(z)*(h(z)*\log\frac{1}{1-z})\left(\frac{h(z)}{h(z)*\log(\frac{1}{1-z})}\right)}{z\phi'(z)*(h(z)*\log\frac{1}{1-z})},
\end{equation}
 where the function $z\phi'\in\mathcal{K}$ and the function $h*\log1/(1-z)\in\mathcal{S}^*$. Therefore, in view of $\eqref{eq16b}$,  $\eqref{eq16c}$ and \cite[Theorem 2.4, p.54]{8}, it  follows that \begin{equation}\label{eq16d}
\RE\frac{(\phi*h)'(z)}{\left((\phi*h)'-e^{-2\textit{i}\gamma}(\phi*g)'\right)(z)}>\frac{1}{2}, \quad z\in \mathbb{D}.
\end{equation}
As the function $h\in\mathcal{S}^*$ and the function $\phi\in\mathcal{S}_2$, Lemma \ref{theom1a} gives that the function $(h-e^{-2\textit{i}\gamma}g)*\phi=h*\log(1/(1-z))*\phi\in\mathcal{K}$. Similarly, the function $(h-e^{-2\textit{i}\beta}g)*\phi\in\mathcal{K}$. Therefore, in view of \eqref{eq16d}, the result follows by Lemma \ref{lem 13a}.
\end{proof}
\begin{corollary}\label{corla1}Let the function $\phi\in\mathcal{S}_2$ and the function $f= h+ \bar {g} $ be  a harmonic mapping in $\mathbb{D}$ satisfying $h(z)- g(z)=h(z)*\log(1/(1-z))$. Then, we have the following.\begin{itemize}
\item[1] If the function $h\in\mathcal{S}^*$, then the convolution $f\tilde{*}\phi\in \mathcal{S}_H^0$ and is convex  in the direction of real axis.
\item[2] If, for some $\theta$ $(0\leq\theta<\pi)$, the function $h(z)=z+\sum_{n=2}^\infty a_nz^n\in\mathcal{S}^*$  satisfies the inequality $\sum_{n=2}^\infty |a_n|\sqrt{2n(n-1)(1-\cos2\theta)+1}\leq1$, then the convolution $f\tilde{*}\phi\in \mathcal{S}_H^0$ and is convex  in every  direction $-\beta$ such that $|\beta|\leq \theta$.
\item[3] If, for some $\theta$ $(0\leq\theta<\pi)$ such that $\cos2\theta\leq1/4$, the function $h(z)=z+\sum_{n=2}^\infty a_nz^n$ satisfies the inequality $\sum_{n=2}^\infty |a_n|\sqrt{2n(n-1)(1-\cos2\theta)+1}\leq1$, then the convolution $f\tilde{*}\phi\in \mathcal{S}_H^0$ and is convex  in every  direction $-\beta$ such $|\beta|\leq \theta$
\end{itemize}
\end{corollary}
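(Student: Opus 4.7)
\medskip
\noindent\textbf{Proof plan.} Part (1) is immediate from Theorem \ref{theom16a} applied with $\gamma = 0$: the relation $h(z) - g(z) = h(z) * \log(1/(1-z))$ is precisely the hypothesis of Theorem \ref{theom16a} at $\gamma = 0$, the assumption $h \in \mathcal{S}^*$ supplies the remaining hypothesis, and the conclusion gives convexity in the direction $-\gamma = 0$, i.e., the real axis.

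For (2), the plan is to invoke the ``furthermore'' clause of Theorem \ref{theom16a} (still with $\gamma = 0$), which requires verifying that $h - e^{-2\textit{i}\beta} g \in \mathcal{S}^*$ for each $\beta$ with $|\beta| \leq \theta$. First I would make $g$ explicit: since $\log(1/(1-z)) = \sum_{n=1}^{\infty} z^n/n$, Hadamard convolution yields $h(z) * \log(1/(1-z)) = z + \sum_{n=2}^{\infty} (a_n/n) z^n$, so the hypothesis $h - g = h * \log(1/(1-z))$ gives $g(z) = \sum_{n=2}^{\infty} \tfrac{n-1}{n} a_n z^n$ and hence
\[
h(z) - e^{-2\textit{i}\beta} g(z) = z + \sum_{n=2}^{\infty} a_n \left(1 - \frac{n-1}{n} e^{-2\textit{i}\beta}\right) z^n.
\]
The key computation is the identity
\[
n^2 \left|1 - \frac{n-1}{n} e^{-2\textit{i}\beta}\right|^2 = 2n(n-1)(1 - \cos 2\beta) + 1,
\]
which comes from expanding the modulus squared and using $n^2 + (n-1)^2 = 2n(n-1) + 1$. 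Combined with the observation that $\cos 2\beta \geq \cos 2\theta$ whenever $|\beta| \leq \theta$ (over the relevant subrange), the coefficient hypothesis gives
\[
\sum_{n=2}^{\infty} n \left|a_n\left(1 - \tfrac{n-1}{n} e^{-2\textit{i}\beta}\right)\right| \leq \sum_{n=2}^{\infty} |a_n| \sqrt{2n(n-1)(1-\cos 2\theta) + 1} \leq 1.
\]
By Theorem \ref{theomd} with $\alpha = 0$ this forces $h - e^{-2\textit{i}\beta} g \in \mathcal{S}^*$, and Theorem \ref{theom16a} then delivers convexity of $f \tilde{*} \phi$ in direction $-\beta$ for every such $\beta$.

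For (3), I would show that the extra hypothesis $\cos 2\theta \leq 1/4$ is exactly what is needed to upgrade the same coefficient bound to the conclusion $h \in \mathcal{S}^*$, after which part (2) applies verbatim. The comparison $n|a_n| \leq |a_n| \sqrt{2n(n-1)(1 - \cos 2\theta) + 1}$ holds iff $\cos 2\theta \leq (n-1)/(2n)$; since $(n-1)/(2n) \geq 1/4$ for every $n \geq 2$, the hypothesis $\cos 2\theta \leq 1/4$ yields $\sum_{n=2}^{\infty} n|a_n| \leq 1$, hence $h \in \mathcal{S}^*$ by Theorem \ref{theomd}, and the argument of (2) closes the proof.

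The only delicate step is the algebraic identity relating $|1 - \frac{n-1}{n} e^{-2\textit{i}\beta}|$ to the square root appearing in the statement; once this is in hand, the rest is a direct application of Theorem \ref{theomd} and Theorem \ref{theom16a}. The monotonicity of $1 - \cos 2\beta$ in $|\beta|$ is used only to reduce a bound uniform in $\beta$ with $|\beta|\leq\theta$ to the single bound at $\beta = \theta$.
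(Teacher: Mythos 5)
Your proposal is correct and follows essentially the same route as the paper: part (1) is read off from Theorem \ref{theom16a} with $\gamma=0$, and parts (2) and (3) are handled by computing $g(z)=\sum_{n\ge 2}\frac{n-1}{n}a_nz^n$, using the identity $n^2\left|1-\frac{n-1}{n}e^{-2\textit{i}\beta}\right|^2=2n(n-1)(1-\cos 2\beta)+1$ together with Theorem \ref{theomd} to get $h-e^{-2\textit{i}\beta}g\in\mathcal{S}^*$, and observing that $\cos 2\theta\le 1/4\le (n-1)/(2n)$ yields $\sum n|a_n|\le 1$, hence $h\in\mathcal{S}^*$, exactly as in the paper. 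You even make explicit the comparison $\cos2\theta\le(n-1)/(2n)$ that the paper only asserts, and your parenthetical caveat about $\cos 2\beta\ge\cos 2\theta$ (valid for $\theta\le\pi/2$) is a limitation shared by the paper's own argument, so no genuine deviation or gap.
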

\begin{proof}
$(1)$ is obvious from Theorem \ref{theom16a}. For $(2)$  and $(3)$, in view of Theorem \ref{theom16a}, it is enough to prove that the function $h-e^{-2\textit{i}\beta}g\in\mathcal{S}^*$ for every $\beta$ such that $|\beta|\leq \theta$ and  $h\in\mathcal{S}^*$. If $\cos2\theta\leq1/4$, then the inequality $\sum_{n=2}^\infty |a_n|\sqrt{2n(n-1)(1-\cos2\theta)+1}\leq1$ implies the inequality $\sum_{n=2}^\infty n|a_n|\leq1$. Hence, by Theorem \ref{theomd}, the function $h\in\mathcal{S}^*$.  Also, for $\beta$ real, we have \begin{align*}
h(z)-e^{-2\textit{i}\beta}g(z)&=h(z)-e^{-2\textit{i}\beta}(h(z)-h(z)*\log(1/1-z))\\&=z+\sum_{n=2}^\infty a_n(1-e^{-2\textit{i}\beta}(n-1)/n ).
\end{align*}
Therefore, in  view of $(2)$ and $(3)$, we see, for $|\beta|\leq\theta$, that the function $h-e^{-2\textit{i}\beta}g$ satisfies the inequality  \begin{align*}
\sum_{n=2}^\infty n| a_n(1-e^{-2\textit{i}\beta}(n-1/n)|&=\sum_{n=2}^\infty | a_n|\sqrt{2n(n-1)(1-\cos2\beta)+1}\\&\leq\sum_{n=2}^\infty | a_n|\sqrt{2n(n-1)(1-\cos2\theta)+1}\leq1.
\end{align*}Hence, by Theorem \ref{theomd}, the function $h-e^{-2\textit{i}\beta}g\in\mathcal{S}^*$.
\end{proof}

\remark\label{remaks1} Taking $\theta=\pi/2$ in  Corollary \ref{corla1}, we see that, if the function $\phi\in\mathcal{S}_2$ and the function $h(z)=z+\sum_2^\infty a_nz^n$ satisfies the inequality $\sum_2^\infty (2n-1)|a_n|\leq1$, then the convolution  $f\tilde{*}\phi\in\mathcal{K}_H^0$, where the function $f=h+\bar g$ and the function $g(z)=h(z)-h(z)*\log(1/(1-z)) $.

\begin{example}Let the harmonic function $f=h+\bar {g}$ be  given by the function $h(z)=z/(1-z)^2$ and the function $g(z)=z/(1-z)^2-z/(1-z)=z/(1-z)^2-z/(1-z)^2*\log(1/(1-z))$.  Now, taking the function $\phi(z)=\log(1/(1-z))$ in Theorem \ref{theom16a}, we get the convolution $f(z)*\log(1/(1-z))=z/(1-z)+\overline{z/(1-z)-\log(1/(1-z))}\in\mathcal{S}_H^0$ and is convex in the direction of real axis.
\end{example}

\begin{example}Let the harmonic function $f=h+\bar {g}$ be given by the function $h(z)=z+z^2/3$ and the function $g(z)=z^2/6$ and let the function $\phi(z)=\log(1/(1-z))$. Then, the functions $f$ and $\phi$ satisfy the conditions in Remark \ref {remaks1}, and hence the convolution $(f*\phi)(z)=z+z^2/6+\overline{z^2/12}\in\mathcal{K}_H^0$.
\end{example}

We denote the convolution $f*f*\dots*f$ ($n$-times) by $\left(f\right)_*^n$.
A simple calculation shows that, for $|\alpha|=1$ and $|z|<1$,  \[\left(\frac{z}{(1-z\alpha)^2}\right)^n_**\left(\alpha \log\frac{1}{1-z/\alpha}\right)_*^n=\frac{z}{1-z}\quad\text{ for all }  n\in \mathbb{N}.\]
Since the function $z/(1-z)$ is convolution identity, the inverse under convolution of  the function $\left(\frac{z}{(1-z\alpha)^2}\right)^n_*$ is the function $\left(\alpha \log\frac{1}{1-z/\alpha}\right)_*^{n}.$ For the  function $f$, we write the inverse of $\left(f\right)_*^n$ by $\left(f\right)_*^{-n}.$

\begin{theorem}\label{theom4a}
Let $n\in\mathbb{N}$, $|\alpha| =|\gamma|=1$ and $0\leq\beta<2\pi$. Let the  function $ f= h+ \bar {g} $ be a  harmonic mapping in $\mathbb{D}$ satisfying \[ h(z)-e^{-2\textit{i}\beta}g(z)=\frac{z}{(1-\alpha z)^2}*\bigg( \frac{z}{(1-\gamma z)^2}\bigg)_*^{n-2},\quad z \in\mathbb{D},\] and let the  function $\phi \in \mathcal{S}_n$. If the  function $ (h-e^{-2\textit{i}\delta}g)*\phi\in\mathcal{K}$ for some real number $\delta$ and
\begin{equation}\label{eq13}
\RE\left\lbrace\frac{(1-\alpha z)^2}{z}\left( h(z)*\bigg(\gamma \log\frac{1}{1- z/\gamma}\bigg)_*^{n-2}\right) \right\rbrace>\frac{1}{2},\quad z \in \mathbb{D},
\end{equation}
 then the convolution $f\tilde{\ast}\phi \in \mathcal{S}_H^0$ and is convex in the direction of $-\delta$.
\end{theorem}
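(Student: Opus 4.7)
My plan is to apply Lemma~\ref{lem 13a}. The convexity of $(h-e^{-2\textit{i}\delta}g)*\phi$ is already a hypothesis, so that lemma's conclusion will follow the moment I verify its half-plane condition \eqref{eq11} with the parameter $\gamma$ there played by the present $\beta$, namely
\[
\RE\frac{(\phi*h)'(z)}{\bigl((\phi*h)'-e^{-2\textit{i}\beta}(\phi*g)'\bigr)(z)}>\frac{1}{2},\qquad z\in\mathbb{D}.
\]

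Abbreviating $K_\zeta(z):=z/(1-\zeta z)^2$ and $\ell_\zeta(z):=\zeta\log(1/(1-z/\zeta))$, a check on coefficients gives $K_\zeta*\ell_\zeta=z/(1-z)$, the convolution identity. Using $(F*G)'(z)=z^{-1}\mathcal{D}(F*G)(z)$ and $\mathcal{D}(F*G)=\mathcal{D}F*G$, the factor $z^{-1}$ cancels in the ratio and the displayed quotient becomes $(\mathcal{D}\phi*h)/(\mathcal{D}\phi*(h-e^{-2\textit{i}\beta}g))$. The hypothesis on $h-e^{-2\textit{i}\beta}g$ then makes the denominator equal to $\mathcal{D}\phi*K_\alpha*(K_\gamma)_*^{n-2}$. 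Introducing $H:=h*(\ell_\gamma)_*^{n-2}$, so that $h=H*(K_\gamma)_*^{n-2}$, and $\Phi:=\mathcal{D}\phi*(K_\gamma)_*^{n-2}$, the ratio reduces cleanly to $(\Phi*H)/(\Phi*K_\alpha)$.

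The main technical step is to verify that $\Phi\in\mathcal{K}$. A direct coefficient computation, using that the $k$th coefficient of $(K_\gamma)_*^{n-2}$ is $k^{n-2}\gamma^{(n-2)(k-1)}$, yields
\[
\Phi=\mathcal{D}\bigl((\mathcal{D}^{n-2}\phi)_{\gamma^{n-2}}\bigr),
\]
where $f_\zeta(z):=\zeta^{-1}f(\zeta z)$ denotes rotation. Since $\phi\in\mathcal{S}_n$, relation \eqref{eq0} gives $\mathcal{D}^{n-2}\phi\in\mathcal{S}_2$; rotation invariance of $\mathcal{S}_2$ (immediate from $\mathcal{D}^{m}f_\zeta=(\mathcal{D}^{m}f)_\zeta$ together with the rotation invariance of $\mathcal{S}^*$) then forces $(\mathcal{D}^{n-2}\phi)_{\gamma^{n-2}}\in\mathcal{S}_2$. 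Applying $\mathcal{D}$ once produces a convex function, so $\Phi\in\mathcal{K}$, as desired.

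To conclude, $K_\alpha$ is a rotation of the Koebe function and therefore lies in $\mathcal{S}^*$. Writing $H=(H/K_\alpha)\cdot K_\alpha$ in the ordinary pointwise sense, the hypothesis \eqref{eq13} is precisely the statement $\RE(H(z)/K_\alpha(z))>1/2$ on $\mathbb{D}$. The Ruscheweyh--Sheil-Small type convolution result \cite[Theorem~2.4, p.~54]{8}, which is the same tool invoked in the proof of Theorem~\ref{theom16a}, applied to the convex function $\Phi$, the starlike function $K_\alpha$, and the positive-real-part function $H/K_\alpha$, gives $\RE\bigl((\Phi*H)(z)/(\Phi*K_\alpha)(z)\bigr)>1/2$ on $\mathbb{D}$. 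This is exactly \eqref{eq11} with $\gamma$ replaced by $\beta$, and Lemma~\ref{lem 13a} completes the proof. I expect the main obstacle to be the bookkeeping behind the identity that recognises $\Phi$ as a S\~{a}l\~{a}gean derivative of a rotated $\mathcal{S}_2$ function; once that rotation identity is correctly assembled, the rest is a routine application of the convex--starlike convolution machinery.
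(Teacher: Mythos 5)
Your proposal is correct and takes essentially the same route as the paper: both verify condition \eqref{eq11} of Lemma~\ref{lem 13a} (with its $\gamma$ played by $\beta$ and its $\beta$ by $\delta$) by rewriting the ratio as $(\Phi*H)/(\Phi*K_\alpha)$ with $\Phi=z\phi'*\left(\frac{z}{(1-\gamma z)^2}\right)_*^{n-2}$ convex, $K_\alpha=\frac{z}{(1-\alpha z)^2}$ starlike and $\RE (H/K_\alpha)>1/2$ being exactly \eqref{eq13}, and then invoke \cite[Theorem 2.4, p.~54]{8}. The only minor difference is that you establish $\Phi\in\mathcal{K}$ via an explicit coefficient/rotation identity, whereas the paper obtains it (as in \eqref{eq13b}) by repeated application of Lemma~\ref{theom1a}; both justifications are sound.
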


\begin{proof}We have\[ (h(z)-e^{-2\textit{i}\beta}g(z))*\phi=\frac{z}{(1-\alpha z)^2}*\bigg( \frac{z}{(1-\gamma z)^2}\bigg)_*^{n-2}*\phi,\]  where the functions  $z/(1-\alpha z)^2$, $ z/(1-\gamma z)^2\in\mathcal{S}^*$ and  the function $\phi\in\mathcal{S}_n$. Therefore, by repeated application of Theorem \ref{theom1a}, we see that the function $(h-e^{-2\textit{i}\beta}g)*\phi\in\mathcal{K}.$
  Also, we have
 \begin{align}
 \lefteqn{\frac{(\phi*h)'(z)}{\left((\phi*h)'-e^{-2\textit{i}\beta}(\phi*g)'\right)(z)}}\notag\\
 & =\frac{z\phi'(z)*\left\lbrace h(z)*\bigg(\frac{z}{(1-\gamma z)^2}\bigg)^{n-2}_**\bigg(\gamma \log\frac{1}{1-z/\gamma}\bigg)_*^{n-2}\right\rbrace}{z\phi'(z)*(h-e^{-2\textit{i}\beta}g)(z)}\notag\\& =\frac{z\phi'(z)*\bigg(\frac{z}{(1-\gamma z)^2}\bigg)^{n-2}_**\frac{z}{(1-\alpha  z)^2} \left\lbrace \frac{(1-\alpha z)^2}{z}\left( h(z)*\bigg(\gamma \log\frac{1}{1- z/\gamma}\bigg)_*^{n-2}\right)\right\rbrace}{z\phi'(z)*\bigg(\frac{z}{(1-\gamma z)^2}\bigg)^{n-2}_**\frac{z}{(1-\alpha  z)^2}}.\label{eq13a}
\end{align}
Since the function $ \phi \in \mathcal{S}_n$ and the function $z/(1-\gamma z)^2\in\mathcal{S}^*$,  from Lemma \ref{theom1a}, we have \begin{equation}\label{eq13b}
 z\phi'(z) *\left(\frac{z}{(1-\gamma z)^2}\right)^{n-2}_* \in \mathcal{K}.
 \end{equation} Since the function $z/(1-\alpha z)^2 \in\mathcal{S}^*$,  in view of \eqref{eq13}, \eqref{eq13a} and \eqref{eq13b}, it follows, by \cite[Theorem 2.4, p.54]{8}, that \[\RE\frac{(\phi*h)'(z)}{\left((\phi*h)'-e^{-2\textit{i}\beta}(\phi*g)'\right)(z)}>1/2,\quad z\in\mathbb{D}.\] The result now follows from Lemma \ref{lem 13a}.
\end{proof}

\remark\label{remak38} Take $n=2$ in Theorem \ref{theom4a}.  Let $|\alpha| =1$, $0\leq\beta<2\pi$. Let the harmonic mapping $f=h+\bar g$  satisfy
 \begin{equation}\RE\label{eq1}
{\frac{(1-\alpha z)^2}{z}}h(z)> \frac{1}{2}, \quad  h(z)-e^{-2\textit{i}\beta}g(z)=\frac{z}{(1-\alpha z)^2 },  \quad z \in \mathbb{D},
\end{equation}
 and the function $ (h-e^{-2\textit{i}\delta}g)\in\mathcal{S}^*$ for some real number $\delta$. If the function $\phi \in \mathcal{S}_2$, then the convolution $f\tilde{\ast}\phi \in \mathcal{S}_H^0$ and is convex in the direction  $-\delta$, and in particular in the direction  $-\beta$.

\remark\label{remak39} Take $n=\gamma=1$ in Theorem \ref{theom4a} and notice that
\begin{align*}
\frac{z}{(1-\alpha z)^2}*\bigg( \frac{z}{(1-z)^2}\bigg)_*^{-1}=\frac{z}{(1-\alpha z)^2}* \log \frac{1}{1-z}=\frac{z}{(1-\alpha z)},
\end{align*}
and
\begin{align*}
 \frac{(1-\alpha z)^2}{z}\left( h(z)*\bigg(\log\frac{1}{1-z}\bigg)_*^{-1}\right)  = \frac{(1-\alpha z)^2}{z}\left( h(z)*\frac{z}{(1-z)^2}\right)= \frac{(1-\alpha z)^2}{z} \mathcal{D}h(z).
\end{align*}
Let $|\alpha| =1$, $0\leq\beta<2\pi$. Let the  harmonic mapping $f=h+\bar g$ satisfy
\begin{equation}
  h(z)-e^{-2\textit{i}\beta}g(z)=\frac{z}{(1-\alpha z)},\quad\RE\left\lbrace\frac{(1-\alpha z)^2}{z} \mathcal{D}h(z)\right\rbrace>\frac{1}{2},\quad z\in\mathbb{D},
\end{equation}
and the function $ h-e^{-2\textit{i}\delta}g\in\mathcal{K}$ for some real number $\delta$. If the function $\phi\in\mathcal{K}$, then the convolution $f\tilde{\ast}\phi \in \mathcal{S}_H^0$ and is convex in the direction $-\delta$, and in particular in the direction  $-\beta$.

\remark\label{remak40} Take $n=3$, $\gamma=1$ in Theorem \ref{theom4a}. First notice that
\[  \frac{z}{(1-\alpha z)^2}*\frac{z}{(1-z)^2} = \frac{z+\alpha z^2}{(1-\alpha z)^3}.\]
Let   $|\alpha| =1$, $0\leq\beta<2\pi$. Let the harmonic mapping $f=h+\bar g$ satisfy\[ h(z)-e^{-\textit{i}\beta}g(z)=  \frac{z+\alpha z^2}{(1-\alpha z)^3},\quad \RE\frac{(1-\alpha z)^2}{z}\left\lbrace h(z)*\log\frac{1}{1-z}\right\rbrace >1/2,\quad z\in\mathbb{D}.\]If the function $\phi\in\mathcal{S}_3$, then the convolution $f\tilde{\ast}\phi \in \mathcal{S}_H^0$ and is convex in the the direction $-\beta$.

Taking $\alpha=1$, $\beta=0$ in Remark \ref{remak38}, we get the following result.
\begin{corollary} \label{theom2a}\cite{8}
Let the function $\phi \in \mathcal{S}_2$ and the function $ f= h+ \bar {g} $ be a harmonic mapping in $\mathbb{D}$ satisfying $ h(z)-g(z)=z/(1-z)^2$ for all $z \in\mathbb{D}$. If\[\RE{\frac{(1-z)^2}{z}}h(z)> 1/2,   \quad \text{ for z} \in \mathbb{D},\]
then the convolution $f\tilde{\ast}\phi \in \mathcal{S}_H^0$ and is convex in the the direction of real axis.
\end{corollary}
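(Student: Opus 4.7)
The plan is to derive this statement as the immediate specialization $\alpha = 1$, $\beta = 0$, $\delta = 0$ of Remark \ref{remak38} (which itself is the $n=2$ case of Theorem \ref{theom4a}, already proved). First I would match the hypotheses: with $\alpha=1$ and $\beta=0$, the Remark's identity $h(z)-e^{-2\textit{i}\beta}g(z) = z/(1-\alpha z)^2$ collapses precisely to the assumed $h(z)-g(z) = z/(1-z)^2$, while its positivity requirement $\RE\{(1-\alpha z)^2 h(z)/z\} > 1/2$ coincides with the given $\RE\{(1-z)^2 h(z)/z\} > 1/2$.

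The only remaining hypothesis of Remark \ref{remak38} is that $h - e^{-2\textit{i}\delta}g \in \mathcal{S}^*$ for some real $\delta$. Taking $\delta = 0$, this reduces to $h - g \in \mathcal{S}^*$, and by the standing identity $h - g = z/(1-z)^2$ this is the classical Koebe function, whose starlikeness is standard. Since $\phi \in \mathcal{S}_2$ by assumption, Remark \ref{remak38} applies and yields $f\tilde{*}\phi \in \mathcal{S}_H^0$, convex in the direction $-\delta = 0$, i.e., the real axis.

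There is no substantive obstacle; the corollary is purely a packaging of the $\alpha=1$, $\beta=0$ case of Remark \ref{remak38}, the only external fact used being the well-known starlikeness of $z/(1-z)^2$. If one preferred to invoke Theorem \ref{theom4a} directly with $n=2$ and $\alpha=\gamma=1$, the empty convolutional powers $(z/(1-\gamma z)^2)_*^{n-2}$ and $(\gamma\log(1/(1-z/\gamma)))_*^{n-2}$ should be interpreted as the convolution identity $z/(1-z)$, after which all the chained hypotheses collapse to the two listed here and the same conclusion is reached.
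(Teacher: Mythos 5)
Your proposal is correct and follows exactly the paper's route: the paper derives this corollary by taking $\alpha=1$, $\beta=0$ in Remark \ref{remak38}, and your choice $\delta=0$ together with the starlikeness of the Koebe function $z/(1-z)^2$ is precisely what the Remark's clause ``in particular in the direction $-\beta$'' already encodes. No gaps.
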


Next, we give two examples of non-univalent convolution products.

\begin{example}\label{exam42}
 For $a\geq{-1}$ $(a\neq 0)$, consider the harmonic mapping $f_a=h+\bar{g}$ given by $h(z)=(1+ z/a)l(z)$ and $g(z)= zl(z)/a$, where $l(z)=z/(1-z)$ is analytic right-half plane mapping. Then, for the function $\phi(z)=z+z^2/2\in\mathcal{S}^*$,  we have
 \[(f_a\tilde{*}\phi)(z)=z+\frac{1+a}{2a}z^2+\frac{1}{2a}\bar{z}^2.\]Its Jacobian $\mathit{J}_{f_a\tilde{*}\phi}$,  given by\[\mathit{J}_{f_a\tilde{*}\phi}(z)=|(h*\phi)'(z)|^2-|(g*\phi)'(z)|^2=1+\frac{2+a}{a}|z|^2+2\frac{1+a}{a}\RE z,\]vanishes at $z=-a/(a+2)\in\mathbb{D}.$
\end{example}

\begin{example}\label{exam42a}
For $0<b<1/2$, consider the harmonic mapping $F_b=h+\bar{g}$ given by \[h(z)=\frac{z+(1+2b)z^2}{(1-z)^3}\quad\text{and}\quad g(z)= \frac{2bz^2}{(1-z)^3}.\]Then, for the function $\phi(z)=z+z^2/8\in\mathcal{S}_2$, the  Jacobian $\mathit{J}_{F_b\tilde{*}\phi}$ of the convolution $F_b\tilde{*}\phi$, given by\[\mathit{J}_{F_b\tilde{*}\phi}(z)=1+(1+b)|z|^2+(2+b)\RE z,\]vanishes at $z=-1/(1+b)\in\mathbb{D}.$
\end{example}

Examples \ref{exam42}-\ref{exam42a} show that if the function $\phi\in\mathcal{S}^*$ and $\mathcal{S}_2$, then respectively the convolutions  $f_a\tilde{*}\phi$ and  $F_b\tilde{*}\phi$  need not be univalent, where the functions $f_a$ and $F_b$ are respectively given in the  Examples \ref{exam42}-\ref{exam42a}. However, the results are true respectively for the function $\phi\in\mathcal{K}$ and  $\mathcal{S}_3$. In fact we have the following the results.

\begin{corollary} \label{cor2a}
For $a\geq6$, let the function $ f_a=h+ \bar{g}$ be the harmonic mapping given in Example \ref{exam42}.  If the function $\phi\in\mathcal{K}$, then the convolution $f_a\tilde{\ast}\phi \in \mathcal{S}_H$ and is convex in the direction of real axis.
\end{corollary}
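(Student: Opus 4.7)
The plan is to recognize this corollary as a direct application of Remark \ref{remak39} with the choices $\alpha=1$, $\beta=0$, $\delta=0$. Writing $f_a=h+\bar g$ with $h(z)=(1+z/a)z/(1-z)$ and $g(z)=z^2/(a(1-z))$, I would first check the two algebraic hypotheses of that remark: the identity
\[
h(z)-g(z)=\frac{z}{1-z}+\frac{z^2}{a(1-z)}-\frac{z^2}{a(1-z)}=\frac{z}{1-z},
\]
which is immediate, and the fact that $h-g=z/(1-z)\in\mathcal{K}$, which is standard (the analytic right half-plane mapping is convex). This handles two of the three requirements of Remark \ref{remak39}, and also confirms $\delta=0$ is admissible.

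Next I would verify the real-part inequality
\[
\RE\!\left\{\frac{(1-z)^2}{z}\,\mathcal{D}h(z)\right\}=\RE\bigl\{(1-z)^2h'(z)\bigr\}>\tfrac{1}{2},\qquad z\in\mathbb{D}.
\]
A direct differentiation gives
\[
h'(z)=\frac{1}{(1-z)^2}+\frac{z(2-z)}{a(1-z)^2}=\frac{a+2z-z^2}{a(1-z)^2},
\]
so that $(1-z)^2 h'(z)=1+(2z-z^2)/a$. The inequality to be checked therefore reduces to
\[
\RE(2z-z^2)>-\frac{a}{2}\qquad\text{for every }z\in\mathbb{D}.
\]

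The main obstacle, and the step that pinpoints the sharp constant $a=6$, is bounding $\RE(2z-z^2)$ on the unit disk. Writing $z=x+iy$ with $x^2+y^2<1$, one has $\RE(2z-z^2)=2x-x^2+y^2$, and a short analysis (reduce to the boundary $y^2=1-x^2$, giving $1-2(x-1/2)^2+1/2$) shows that the infimum over $\overline{\mathbb{D}}$ equals $-3$, attained only at $z=-1$. Since $|z|<1$ strictly in $\mathbb{D}$, the bound $\RE(2z-z^2)>-3$ is strict throughout $\mathbb{D}$. Therefore the required inequality holds precisely when $a/2\geq 3$, i.e.\ under the hypothesis $a\geq 6$.

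Once these three ingredients are in place, Remark \ref{remak39} applies to $f_a$ with $\phi\in\mathcal{K}$, yielding $f_a\tilde{*}\phi\in\mathcal{S}_H^0$ and convexity in the direction $-\delta=0$, which is the direction of the real axis. Thus the content of the proof is almost entirely the verification of the last inequality, and the number $6$ is extremal for the approach because the infimum of $\RE(2z-z^2)$ on $\mathbb{D}$ is exactly $-3$.
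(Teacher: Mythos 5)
Your proposal is correct and follows the paper's own route exactly: verify $h(z)-g(z)=z/(1-z)\in\mathcal{K}$, compute $(1-z)^2h'(z)=1+z(2-z)/a$, check $\RE\{1+z(2-z)/a\}>1/2$ for $a\geq6$, and invoke Remark \ref{remak39} with $\alpha=1$, $\beta=\delta=0$. The only difference is that you spell out the sharpness analysis (the infimum $-3$ of $\RE(2z-z^2)$ on $\mathbb{D}$), which the paper leaves implicit.
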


\begin{proof}
We have $h(z)-g(z)=l(z)=z/(1-z)$. Also, for $a\geq6$, we see that
\begin{align*}
 \RE\left\lbrace\frac{(1-z)^2}{z} \mathcal{D}h(z)\right\rbrace &=\RE\left\lbrace(1-z)^2h'(z)\right\rbrace\\&=\RE\left\lbrace 1+\frac{z}{a}(2-z)\right\rbrace>1/2 \quad\text{for }z\in\mathbb{D}.
\end{align*}
By  Remark \ref{remak39},     the result follows.
\end{proof}

\begin{corollary}\label{corl3a}
Let the function $ F_b=h+ \bar{g}$ be the harmonic mapping given in Example ~\ref{exam42a}. If $\phi\in\mathcal{S}_3$, then $F_b\tilde{\ast}\phi \in \mathcal{S}_H^0$ and is convex in the the direction of real axis.
\end{corollary}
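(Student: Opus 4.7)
The strategy is to identify Corollary \ref{corl3a} as an instance of Remark \ref{remak40}, namely the case $\alpha=1$, $\beta=0$, $\gamma=1$ of Theorem \ref{theom4a}. Under this specialization, Remark \ref{remak40} requires verifying two things about the harmonic mapping $F_b = h + \bar g$:
\begin{enumerate}
\item[(a)] $h(z) - g(z) = \dfrac{z+z^2}{(1-z)^3}$,
\item[(b)] $\RE\left\{\dfrac{(1-z)^2}{z}\left(h(z)\ast\log\dfrac{1}{1-z}\right)\right\} > \dfrac{1}{2}$ for $z\in\mathbb{D}$.
\end{enumerate}
Once (a) and (b) are established, Remark \ref{remak40} immediately yields $F_b\tilde{\ast}\phi \in \mathcal{S}_H^0$ and convexity in the direction of the real axis for any $\phi\in\mathcal{S}_3$.

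Condition (a) is a routine one-line verification: subtracting $g(z)=2bz^2/(1-z)^3$ from $h(z)=(z+(1+2b)z^2)/(1-z)^3$ cancels the $2b z^2$ contribution and produces $(z+z^2)/(1-z)^3$.

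For (b), the plan is to obtain a closed form for $h(z)\ast\log(1/(1-z))$. Using the Taylor expansion $1/(1-z)^3 = \sum_{n=0}^\infty \binom{n+2}{2} z^n$, I would write
\[
h(z) = z + \sum_{n=2}^\infty n\bigl[n + b(n-1)\bigr]z^n,
\]
so that convolution with $\log(1/(1-z)) = \sum_{n=1}^\infty z^n/n$ divides each coefficient by $n$:
\[
h(z)\ast\log\frac{1}{1-z} = z + \sum_{n=2}^\infty\bigl[n + b(n-1)\bigr]z^n.
\]
Summing this against the standard identities $\sum_{n\ge 1} nz^n = z/(1-z)^2$ and $\sum_{n\ge 2}(n-1)z^n = z^2/(1-z)^2$ collapses the series to $z(1+bz)/(1-z)^2$. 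Then
\[
\frac{(1-z)^2}{z}\left(h(z)\ast\log\frac{1}{1-z}\right) = 1 + bz,
\]
whose real part is $1 + b\RE z > 1 - b > 1/2$ since $0<b<1/2$ and $\RE z > -1$. This verifies (b).

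The only step requiring any care is the closed-form computation in (b); the rest is bookkeeping. I do not anticipate a genuine obstacle, since the choice of parameters $\alpha=\beta=0$ ($\alpha=1$, $\beta=0$ in Remark \ref{remak40}) is dictated by the form of $h-g$, and the specific coefficients of $h$ in Example \ref{exam42a} are tuned so that the weighted sum $n+b(n-1)$ matches $n + bn - b$, which is exactly what produces the clean telescoped form $z(1+bz)/(1-z)^2$ and hence the simple linear expression $1+bz$.
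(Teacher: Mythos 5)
Your proposal is correct and follows essentially the same route as the paper: reduce to Remark \ref{remak40} (the $n=3$, $\gamma=1$, $\alpha=1$, $\beta=0$ case of Theorem \ref{theom4a}) by checking $h-g=(z+z^2)/(1-z)^3$ and $\RE\bigl\{\tfrac{(1-z)^2}{z}\bigl(h*\log\tfrac{1}{1-z}\bigr)\bigr\}=\RE(1+bz)>1/2$. The only (immaterial) difference is computational: you obtain $h*\log\tfrac{1}{1-z}=\tfrac{z(1+bz)}{(1-z)^2}$ by summing Taylor coefficients, while the paper gets it instantly from the identity $h=\mathcal{D}\bigl(\tfrac{z+bz^2}{(1-z)^2}\bigr)$ together with the fact that convolution with $\log\tfrac{1}{1-z}$ inverts $\mathcal{D}$.
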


\begin{proof}
  From the definition of $ F_b$, we have \[h(z)-g(z)= \frac{z+z^2}{(1-z)^3},\] and  \[ \mathcal{D}\bigg( \frac{z+bz^2}{(1-z)^2} \bigg) =\frac{z+(1+2b)z^2}{(1-z)^3}= h(z).\] Therefore, for $|b|\leq1/2$,
\begin{align*}
 \RE\left\lbrace\frac{(1-z)^2}{z}\left( h(z)*\log\frac{1}{(1-z)}\right)\right\rbrace &=\RE\left\lbrace\frac{(1-z)^2}{z}\left( \mathcal{D} \frac{z+bz^2}{(1-z)^2} *\log\frac{1}{(1-z)}\right)\right\rbrace\\&=\RE\left\lbrace\frac{(1-z)^2}{z} \frac{z+bz^2}{(1-z)^2}\right\rbrace\\ &=\RE (1+bz) > 1/2.
\end{align*}
The result now follows from Remark \ref{remak40}.
\end{proof}

 For $ 0\leq \alpha <2\pi $, let $\mathcal{S}^0(H_\alpha)\subset\mathcal{S}_H^0 $ denote the class of all harmonic mappings that maps $\mathbb{D} $ onto $H_\alpha $, where\[ H_\alpha :=\left\lbrace z \in \mathbb{C}:\RE (e^{\textit{i}\alpha}z)>-\frac{1}{2}\right\rbrace .\]In \cite{7}, it is shown that if $f=h+\bar{g}\in\mathcal{S}^0(H_\alpha),$ then \begin{equation}\label{eq3}
 h(z)+e^{-2\textit{i}\alpha}g(z)=\frac{z}{1-e^{\textit{i}\alpha} z}.
 \end{equation}
 Using this result, we check the convexity of the convolution $f \tilde{*} \phi$, where the function $f\in\mathcal{S}^0(H_\alpha)$ and the function $\phi\in\mathcal{K}.$ Before this, we give an example of a mapping in class $H_\alpha$.

\begin{example}\label{exam5}In  \cite{3}, it is shown that the harmonic right half-plane mapping $L=M+\overline{N}$, where  \[M(z)=\frac{z-z^2/2}{(1-z)^2} \quad \text {and} \quad N(z)=-\frac{z^2/2}{(1-z)^2},\] maps $\mathbb{D}$ onto the right-half plane $\left\lbrace w:\RE w>-1/2\right\rbrace.$ Consider the mapping $f_{\alpha}=h_{\alpha}+\bar g_{\alpha}$, where \[h_{\alpha}(z)=\frac{z-e^{\textit{i}\alpha}z^2/2}{(1-e^{\textit{i}\alpha}z)^2} \quad \text{and} \quad g_{\alpha}(z)=-\frac{e^{3\textit{i}\alpha}z^2/2}{(1-e^{\textit{i}\alpha}z)^2}.\] Now, for $w=e^{\textit{i}\alpha}z$, we have
\begin{align*}
 e^{\textit{i}\alpha}f_{\alpha}(z)=e^{\textit{i}\alpha}h_{\alpha}(z)+\overline{e^{-\textit{i}\alpha}g_{\alpha}(z)}=&\frac{ze^{\textit{i}\alpha}-e^{2\textit{i}\alpha}z^2/2}{(1-e^{\textit{i}\alpha}z)^2}+\overline{\frac{-e^{2\textit{i}\alpha}z^2/2}{(1-e^{\textit{i}\alpha}z)^2}}\\&=\frac{w-w^2/2}{(1-w)^2}+\overline{\frac{-w^2/2}{(1-w)^2}}=L(w).
\end{align*}
Therefore the function $e^{\textit{i}\alpha}f_{\alpha}$ maps $\mathbb{D}$ onto the right-half plane $\left\lbrace w':\RE w'>-1/2\right\rbrace$. Hence the function  $f_{\alpha}\in \mathcal{S}^0(H_\alpha).$
\end{example}

\begin{example}\label{exam7}
Consider the function $\phi=z+z^2/2\in\mathcal{S}^*$. Then, for the harmonic  mapping $f_{\alpha}$ given in Example \ref{exam5}, the convolution $f_{\alpha}\tilde{*}\phi$ is given by\[f_{\alpha}\tilde{*}\phi(z)=(h_{\alpha}*\phi)(z)+\overline{(g_{\alpha}*\phi)(z)}=z+\frac{3}{4}\emph{e}^{\textit{i}\alpha}z^2-\frac{1}{2}\overline{\emph{e}^{3\textit{i}\alpha}z^2}.\]Its Jacobian $\mathit{J}_{f_{\alpha}\tilde{*}\phi},$  given by\[\mathit{J}_{f_a\tilde{*}\phi}(z)=|(h_{\alpha}*\phi)'(z)|^2-|(g_{\alpha}*\phi)'(z)|^2=1+2|z|^2+3\RE(\emph{e}^{\textit{i}\alpha} z),\]vanishes at $z=-\emph{e}^{-\textit{i}\alpha}/2\in\mathbb{D}.$\

Example \ref{exam7} shows that for the function $\phi\in\mathcal{S}^*$, the convolution $f_{\alpha}\tilde{*}\phi$ is not univalent, where $f_{\alpha}$ is given in Example \ref{exam5}. However, the result is true for the function $\phi\in\mathcal{K}$. In fact we have the following strong result.
\end{example}

 \begin{theorem}\label{theom9a}
 Let the harmonic mapping $f=h+\bar{g} \in\mathcal{S}^0(H_\alpha)$. If the function $\phi\in\mathcal{K}$,
 then the convolution  $f \tilde{*} \phi\in \mathcal{S}_H^0$ and is convex in the direction $ \pi/2-\alpha.$
 \end{theorem}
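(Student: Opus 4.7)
The plan is to apply Lemma~\ref{lem 13a} with the specific choice $\beta = \gamma = \alpha - \pi/2$, so that the asserted direction $-\beta = \pi/2 - \alpha$ matches the statement and $e^{-2\textit{i}\beta} = e^{-2\textit{i}\gamma} = -e^{-2\textit{i}\alpha}$. Using the defining relation \eqref{eq3} for the class $\mathcal{S}^0(H_\alpha)$, one then obtains $h - e^{-2\textit{i}\beta}g = h + e^{-2\textit{i}\alpha}g = z/(1 - e^{\textit{i}\alpha}z)$. The elementary coefficient computation $\phi * (z/(1 - e^{\textit{i}\alpha}z)) = e^{-\textit{i}\alpha}\phi(e^{\textit{i}\alpha}z)$ exhibits $(h - e^{-2\textit{i}\beta}g) * \phi$ as a rotation of the convex mapping $\phi$, and hence itself convex; this discharges the first hypothesis of Lemma~\ref{lem 13a}.

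Differentiating the previous identity, the denominator in \eqref{eq11} simplifies to $\phi'(e^{\textit{i}\alpha}z)$, and the task reduces to verifying
\[
\RE \frac{(\phi * h)'(z)}{\phi'(e^{\textit{i}\alpha}z)} > \frac{1}{2}, \quad z \in \mathbb{D}.
\]
The key step is to introduce the auxiliary function $F(z) := (1 - e^{\textit{i}\alpha}z)^2 h'(z)$ and to show $F(0) = 1$ and $\RE F > 1/2$. Differentiating \eqref{eq3} gives $h'(z) + e^{-2\textit{i}\alpha}g'(z) = 1/(1 - e^{\textit{i}\alpha}z)^2$, which rearranges as $F(z) = 1/(1 + e^{-2\textit{i}\alpha}\omega(z))$, where $\omega = g'/h'$ is the dilatation of $f$. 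Since $f \in \mathcal{S}_H^0$ is sense-preserving, $|\omega(z)| < 1$ on $\mathbb{D}$, so $1 + e^{-2\textit{i}\alpha}\omega(z)$ lies in the open disk $|w - 1| < 1$; inversion $w \mapsto 1/w$ carries this disk onto the half-plane $\RE w > 1/2$, which yields $\RE F > 1/2$.

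With $F$ in hand, I would combine the standard identities $z(\phi * h)'(z) = \phi(z) * (z h'(z))$ and $z \phi'(e^{\textit{i}\alpha}z) = \phi(z) * z/(1 - e^{\textit{i}\alpha}z)^2$ with the factorization $z h'(z) = F(z) \cdot z/(1 - e^{\textit{i}\alpha}z)^2$, rewriting the target ratio (after multiplying numerator and denominator by $z$) as
\[
\frac{\phi(z) * \bigl[F(z) \cdot z/(1 - e^{\textit{i}\alpha}z)^2\bigr]}{\phi(z) * \bigl[z/(1 - e^{\textit{i}\alpha}z)^2\bigr]}.
\]
Here $\phi \in \mathcal{K}$, the function $z/(1 - e^{\textit{i}\alpha}z)^2$ is starlike (a rotation of the Koebe function), and $F$ satisfies $F(0) = 1$, $\RE F > 1/2$. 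Invoking \cite[Theorem~2.4, p.~54]{8}, the displayed ratio has real part exceeding $1/2$, which confirms \eqref{eq11}; the conclusion then follows from Lemma~\ref{lem 13a}. The principal obstacle is noticing that the sense-preserving constraint on $f$ translates, via the slanted half-plane identity, into the positivity $\RE F > 1/2$; once that is recognised, the remainder is a routine application of the Ruscheweyh--Sheil-Small-type convolution inequality.
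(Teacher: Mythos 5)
Your proposal is correct and follows essentially the same route as the paper: the paper also takes $\beta=\alpha-\pi/2$, derives $\RE\,(1-e^{\textit{i}\alpha}z)^2h'(z)=\RE\,\bigl(1/(1+e^{-2\textit{i}\alpha}\omega(z))\bigr)>1/2$ from \eqref{eq3}, and then invokes Remark \ref{remak39} (i.e.\ Theorem \ref{theom4a} with $n=\gamma=1$), whose proof is exactly your convolution manipulation with \cite[Theorem 2.4, p.54]{8} and Lemma \ref{lem 13a}. You have merely unpacked that remark into a direct application of Lemma \ref{lem 13a}, so the two arguments coincide in substance.
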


 \begin{proof} Since the harmonic mapping $f=h+\bar g \in\mathcal{S}^0(H_\alpha)$, from equation \eqref{eq3}, we have \begin{equation}\label{eq4a}
 h(z)+e^{-2\textit{i}\alpha}g(z)=\frac{z}{(1-e^{\textit{i}\alpha} z)}
 \end{equation}
  or\begin{equation}\label{eq4b}
  h(z)-e^{-2\textit{i}(\alpha-\pi/2)}g(z)=\frac{z}{(1-e^{\textit{i}\alpha} z)}.
  \end{equation} Upon differentiating \eqref{eq4a} and writing the dilatation   $g'/h'$ of $f$ by $\omega$, we get $$h'(z)=\frac{1}{(1-e^{\textit{i}\alpha} z)^2(1+e^{-2\textit{i}\alpha} \omega(z))}.$$Therefore
\begin{equation}\label{eq4}
 \RE\frac{(1-e^{\textit{i}\alpha} z)^2}{z} \mathcal{D}h(z)=\RE\frac{1}{1+e^{-2\textit{i}\alpha} \omega(z)}>\frac{1}{2} \quad\text{for }z\in\mathbb{D}.
\end{equation}
 Using  \eqref{eq4b} and \eqref{eq4} in  Remark \ref{remak39}, we see that the convolution $f \tilde{*} \phi\in \mathcal{S}_H^0$ and is convex in the direction  $\pi/2-\alpha$.
  \end{proof}

  In the next result, we show that the convolution of the harmonic mapping $f_{\alpha}=h_{\alpha}+\overline{g_{\alpha}}$, given in Example \ref{exam5}, with the mappings in class $\mathcal{S}_2$ is convex in two perpendicular directions.

  \begin{theorem}\label{theomab}
  Let the function $f_{\alpha}=h_{\alpha}+\overline{g_{\alpha}}$ be the harmonic mapping defined in Example \ref{exam5}. If the function $\phi\in\mathcal{S}_2$, then the convolution $f_{\alpha}*\phi\in\mathcal{S}_H^0$ and is convex in the directions $-\alpha$ and $\pi/2-\alpha$.
\end{theorem}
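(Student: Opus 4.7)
The plan is to apply Remark \ref{remak38} (the $n=2$ case of Theorem \ref{theom4a}) twice to the harmonic mapping $f_\alpha = h_\alpha + \overline{g_\alpha}$ from Example \ref{exam5}, with a different choice of the free direction parameter $\delta$ in each application, to pick up the two claimed perpendicular directions one at a time.

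First I would identify the parameters of Remark \ref{remak38} with our data by taking its unit-modulus $\alpha$ to be $e^{i\alpha}$ and its $\beta$ to be our angle $\alpha$. This forces two algebraic checks on $f_\alpha$. On one hand, a direct cancellation of $\pm e^{i\alpha}z^2/2$ in the numerator gives
\[
h_\alpha(z) - e^{-2i\alpha}g_\alpha(z) = \frac{z}{(1-e^{i\alpha}z)^2},
\]
matching the structural hypothesis of the remark. On the other hand,
\[
\frac{(1-e^{i\alpha}z)^2}{z}\,h_\alpha(z) = 1 - \frac{e^{i\alpha}z}{2},
\]
whose real part exceeds $1/2$ on $\mathbb{D}$, supplying the analytic positivity condition of Remark \ref{remak38}.

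With these prerequisites in hand, Remark \ref{remak38} yields $f_\alpha\tilde{*}\phi \in \mathcal{S}_H^0$ and convexity in every direction $-\delta$ for which $h_\alpha - e^{-2i\delta}g_\alpha \in \mathcal{S}^*$. Taking $\delta = \alpha$ reuses the identity above and produces the Koebe-type rotation $z/(1-e^{i\alpha}z)^2 \in \mathcal{S}^*$, hence convexity in direction $-\alpha$. Taking $\delta = \alpha - \pi/2$ flips the sign of $e^{-2i\alpha}$ and, combined with equation \eqref{eq3} (valid since $f_\alpha \in \mathcal{S}^0(H_\alpha)$ by Example \ref{exam5}), gives
\[
h_\alpha(z) - e^{-2i(\alpha-\pi/2)}g_\alpha(z) = h_\alpha(z) + e^{-2i\alpha}g_\alpha(z) = \frac{z}{1-e^{i\alpha}z},
\]
a rotated half-plane mapping belonging to $\mathcal{K} \subset \mathcal{S}^*$. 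Remark \ref{remak38} then delivers convexity in direction $-\delta = \pi/2 - \alpha$.

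I do not expect a real obstacle here; the proof is essentially bookkeeping, once one notices the two clean identities for $h_\alpha \mp e^{-2i\alpha}g_\alpha$, which together encode exactly the two perpendicular directions in the statement. The only care needed is in correctly translating between the generic $(\alpha,\beta,\delta)$ of Remark \ref{remak38} and the explicit data for $f_\alpha$, and in observing that the two natural starlike witnesses, $z/(1-e^{i\alpha}z)^2$ and $z/(1-e^{i\alpha}z)$, are precisely the objects produced by the $\delta = \alpha$ and $\delta = \alpha - \pi/2$ choices.
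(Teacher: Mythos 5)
Your proposal is correct and follows essentially the same route as the paper: verify the identity $h_{\alpha}-e^{-2i\alpha}g_{\alpha}=z/(1-e^{i\alpha}z)^2$ together with $\RE\{(1-e^{i\alpha}z)^2h_{\alpha}(z)/z\}=\RE(1-e^{i\alpha}z/2)>1/2$, use \eqref{eq3} (since $f_{\alpha}\in\mathcal{S}^0(H_\alpha)$) to get $h_{\alpha}+e^{-2i\alpha}g_{\alpha}=z/(1-e^{i\alpha}z)$, and then apply Remark \ref{remak38} with the two starlike witnesses $z/(1-e^{i\alpha}z)^2$ and $z/(1-e^{i\alpha}z)$, i.e.\ with $\delta=\alpha$ and $\delta=\alpha-\pi/2$, to obtain convexity in the directions $-\alpha$ and $\pi/2-\alpha$. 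This is exactly the paper's argument.
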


\begin{proof}
From Example \ref{exam5}, we have
\begin{equation}\label{eqab1}
h_{\alpha}(z)-e^{-2\textit{i}\alpha}g_{\alpha}(z)=\frac{z}{(1-e^{\textit{i}\alpha}z)^2},
\end{equation}
 and
\begin{equation}\label{eqab2}
 \RE\left\lbrace\frac{(1-e^{\textit{i}\alpha}z)^2}{z}h_{\alpha}(z)\right\rbrace=1-\frac{1}{2}\RE (e^{\textit{i}\alpha}z)>\frac{1}{2}\quad \text{for }z \in\mathbb{D}.
\end{equation}
 Also, the function $f_{\alpha}\in\mathcal{S}^0(H_\alpha)$. Therefore, \eqref{eq3} gives
\begin{equation}\label{eqab3}
 h(z)-e^{-2\textit{i}(\alpha-\pi/2)}g(z)=\frac{z}{1-e^{\textit{i}\alpha} z}.
\end{equation}
Since the functions $z/(1-e^{\textit{i}\alpha}z)^2$ and $z/(1-e^{\textit{i}\alpha}z)$ are starlike, in view of \eqref{eqab1}, \eqref{eqab2}, \eqref{eqab3} and Remark \ref{remak38}, the result follows.
\end{proof}

\section{Partial sums of functions in class $\mathcal{S}_n$ and their convolution\\ with harmonic mappings}

For $m$, $n\in\mathbb{N}$, we define the $n{th}$ partial sum of the function $f_m(z)= \sum_{l=1}^{\infty} z^l/l^m $ by $f_{m,n}(z):=\sum_{l=1}^n z^l/l^m$. We now investigate the starlikeness nature of these partial sums.

\begin{lemma}\label{theom5a} For $m$, $n\in\mathbb{N}$, the partial sum $f_{m,n}(z)=\sum_{l=1}^n z^l/l^m$ of the function $f_m(z)= \sum_{l=1}^{\infty} z^l/l^m $ satisfies the following:
\begin{itemize}
\item[(1)]$f_{1,2}(z)= z+\frac{z^2}{2} \in\mathcal{S^*}$;
\item[(2)]$f_{2,3}(z)= z+\frac{z^2}{2^2}+ \frac{z^3}{3^2}\in\mathcal{S^*}$;
\item[(3)]$f_{3,n}(z)=\sum_{l=1}^n z^l/l^3\in\mathcal{S^*}$, for all $n\in \mathbb{N} $.
\end{itemize}
\end{lemma}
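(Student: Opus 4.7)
The plan is to apply Theorem \ref{theomd} (the standard coefficient sufficient condition: if $\sum_{n=2}^{\infty}n|a_n|\le 1$, then $f\in\mathcal{S}^*$) to each of the three polynomials directly. Since in every case the coefficients are explicit and positive, verification reduces to a one-line numerical estimate of $\sum n|a_n|$.

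For part (1), with $f_{1,2}(z)=z+z^2/2$, the only nonconstant coefficient beyond the linear term is $a_2=1/2$, giving $\sum_{l=2}^{2}l|a_l|=2\cdot(1/2)=1$, so the hypothesis of Theorem \ref{theomd} is met with equality. For part (2), with $f_{2,3}(z)=z+z^2/4+z^3/9$, I would compute $\sum_{l=2}^{3}l|a_l|=2/4+3/9=1/2+1/3=5/6\le 1$. For part (3), with $f_{3,n}(z)=\sum_{l=1}^{n}z^l/l^3$, the key estimate is
\[
\sum_{l=2}^{n}l\cdot\frac{1}{l^3}=\sum_{l=2}^{n}\frac{1}{l^2}\le\sum_{l=2}^{\infty}\frac{1}{l^2}=\frac{\pi^2}{6}-1<1,
\]
which holds uniformly in $n$. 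In all three cases, Theorem \ref{theomd} then yields starlikeness.

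There is essentially no obstacle here: the polynomials were cherry-picked so that the crude coefficient test already suffices. The only mildly delicate point is part (3), where one needs the standard numerical bound $\pi^2/6-1<1$ (equivalently $\zeta(2)<2$); this could alternatively be handled by the very elementary telescoping estimate $\sum_{l=2}^{\infty}1/l^2<\sum_{l=2}^{\infty}1/(l(l-1))=1$, which avoids invoking the closed form. I would choose the telescoping version for self-containedness. No other machinery (such as the relation \eqref{eq0} or Lemma \ref{theom1a}) is needed for this lemma.
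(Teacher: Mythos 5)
Your proposal is correct and is essentially identical to the paper's own proof: both apply the coefficient criterion of Theorem \ref{theomd} (with $\alpha=0$) directly to each polynomial, and for part (3) both use the bound $\sum_{l\ge 2}1/l^{2}=\pi^{2}/6-1<1$. The telescoping alternative you mention is a harmless minor variant.
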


\begin{proof}
$(1)$ and $(2)$ follows from Theorem \ref{theomd}. For $l\in\mathbb{N}$, let $a_l=1/l^3$ for $1\leq l\leq n$ and $a_l=0$ for all $l>n$. Then, we have \begin{align*}
\sum_{l=2}^\infty l|a_l|&<-1+\sum_{n=1}^\infty\frac{1}{n^2}=-1+\frac{\pi^2}{6}<1.
\end{align*}Therefore, $(3)$ follows from Theorem \ref{theomd}.
\end{proof}

Using $\mathcal{D}^mf_{{m+l},n}= f_{l,n}$, Lemma \ref{theom5a} and equation \eqref{eq0} gives the following:

\noindent
\begin{lemma}\label{remak1b}The partial sums satisfies:
\begin{itemize}
\item[(1)]$f_{m,2}(z)= z+\frac{z^2}{2^{m}} \in\mathcal{S}_{m-1}$,  $\text{if m}\geq{1}$;
\item[(2)]$f_{m,3}(z)= z+\frac{z^2}{2^{m}}+ \frac{z^3}{3^m}\in\mathcal{S}_{m-2}$, $ \text{if m}\geq{2}$;
\item[(3)]$f_{m,n}(z)= z+\frac{z^2}{2^m}+ \frac{z^3}{3^m}+\dots+\frac{z^n}{n^m} \in\mathcal{S}_{m-3}$, for all $n\in\mathbb{N}, \text{if m}\geq{3}$.
\end{itemize}
\end{lemma}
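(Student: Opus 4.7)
The statement to prove is precisely the bridge between the starlikeness facts in Lemma \ref{theom5a} and the $n$-starlikeness of the partial sums, and the text already signals the intended route (apply $\mathcal{D}^m$ to $f_{m+l,n}$ and use \eqref{eq0}). So my plan is essentially to execute that hint carefully for each of the three items.

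First I would record the basic identity on which everything rests. Applying the S\u{a}l\u{a}gean operator termwise,
\[
\mathcal{D}^{k} f_{j,n}(z) = \sum_{l=1}^{n} \frac{l^{k}}{l^{j}} z^{l} = \sum_{l=1}^{n} \frac{z^{l}}{l^{\,j-k}} = f_{j-k,\,n}(z),
\]
valid whenever $0 \le k \le j$. In particular, choosing $k=m-1$, $k=m-2$, $k=m-3$ respectively collapses $f_{m,2}$, $f_{m,3}$, $f_{m,n}$ down to $f_{1,2}$, $f_{2,3}$, $f_{3,n}$, which are exactly the three cases treated in Lemma \ref{theom5a}.

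Next, I would invoke the equivalence \eqref{eq0} in the specialized form $f \in \mathcal{S}_{n} \Leftrightarrow \mathcal{D}^{n} f \in \mathcal{S}^{*}$. For part (1), with $m \ge 1$, I set $n=m-1$ and observe that $\mathcal{D}^{m-1} f_{m,2} = f_{1,2}$, which lies in $\mathcal{S}^{*}$ by Lemma \ref{theom5a}(1); therefore $f_{m,2} \in \mathcal{S}_{m-1}$. For part (2), with $m \ge 2$, the same argument with $n = m-2$ yields $\mathcal{D}^{m-2} f_{m,3} = f_{2,3} \in \mathcal{S}^{*}$ by Lemma \ref{theom5a}(2), hence $f_{m,3} \in \mathcal{S}_{m-2}$. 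For part (3), with $m \ge 3$, applying $\mathcal{D}^{m-3}$ gives $f_{3,n} \in \mathcal{S}^{*}$ for every $n$ by Lemma \ref{theom5a}(3), so $f_{m,n} \in \mathcal{S}_{m-3}$ for every $n \in \mathbb{N}$.

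There is no real obstacle here: the proof is purely mechanical once the identity $\mathcal{D}^{k} f_{j,n} = f_{j-k,n}$ is in hand. The only minor subtlety I would flag is the bookkeeping of the range $0 \le k \le j$ so that we never try to invert $\mathcal{D}$, which is why the hypotheses $m \ge 1$, $m \ge 2$, $m \ge 3$ appear in the three items respectively; with these in force the reduction is legitimate and \eqref{eq0} applies verbatim.
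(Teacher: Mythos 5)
Your proof is correct and follows exactly the paper's route: the paper derives the lemma from the identity $\mathcal{D}^m f_{m+l,n}=f_{l,n}$ together with \eqref{eq0} and Lemma \ref{theom5a}, which is precisely the reduction you carry out. Your write-up simply makes the bookkeeping explicit.
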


\begin{lemma}\label{theom7a}
Let $\phi_p$ denotes the $p{th}$ partial sum of the function $ \phi\in \mathcal{S}_m$. Then, we have the following:
\begin{itemize}
\item[(1)] $\phi_2 \in \mathcal{S}_{m-2}$,   if $m\geq{2}$;
\item[(2)] $\phi_3 \in \mathcal{S}_{m-3}$,  if $m\geq{3}$;
\item[(3)] $\phi_p \in \mathcal{S}_{m-4}$,    for all   $p\in\mathbb{N}$, if $m\geq{4}$.
\end{itemize}
\end{lemma}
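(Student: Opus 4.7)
The plan is to derive each part from the coefficient sufficient condition of Theorem \ref{theomd2} (with $\alpha = 0$), combined with the classical coefficient bound for starlike functions applied to $\mathcal{D}^m \phi$. Writing $\phi(z) = z + \sum_{k \geq 2} a_k z^k$, the hypothesis $\phi \in \mathcal{S}_m$ means $\mathcal{D}^m \phi(z) = z + \sum_{k \geq 2} k^m a_k z^k$ is starlike, so its $k$th Taylor coefficient satisfies $|k^m a_k| \leq k$, giving the decisive estimate $|a_k| \leq k^{1-m}$ for every $k \geq 2$.

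For part (1), Theorem \ref{theomd2} shows that $\phi_2 \in \mathcal{S}_{m-2}$ follows once $2^{m-1}|a_2| \leq 1$, and the bound $|a_2| \leq 2^{1-m}$ makes this hold with equality at worst. For part (2), the analogous hypothesis for $\phi_3 \in \mathcal{S}_{m-3}$ reads $2^{m-2}|a_2| + 3^{m-2}|a_3| \leq 1$, and with the same coefficient bounds the left side is at most $1/2 + 1/3 = 5/6 < 1$. For part (3), the required inequality is $\sum_{k=2}^{p} k^{m-3}|a_k| \leq 1$; substituting $|a_k| \leq k^{1-m}$ collapses the sum to $\sum_{k=2}^{p} k^{-2}$, which is bounded uniformly in $p$ by $\pi^2/6 - 1 < 1$.

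There is essentially no technical obstacle here beyond the bookkeeping: the convergence of $\sum k^{-2}$ is precisely what makes the conclusion uniform in the partial-sum index $p$, and it explains why the threshold is $m \geq 4$, since for parts (1) and (2) the exponents $m-1$ and $m-2$ would cause the corresponding infinite series to diverge and so no single bound could work for all $p$. The proof relies only on the classical coefficient estimate for starlike functions (applied to $\mathcal{D}^m\phi$) and Theorem \ref{theomd2}, both already available in the paper.
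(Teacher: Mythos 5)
Your argument is correct, but it takes a genuinely different route from the paper. The paper proves the lemma by writing the partial sum as a convolution, $\phi_p=\mathcal{D}^n\phi * f_{n,p}$ with $f_{n,p}(z)=\sum_{l=1}^{p}z^l/l^n$, then combining \eqref{eq0}, Lemma \ref{remak1b} (the $(n-1)$-, $(n-2)$-, $(n-3)$-starlikeness of $f_{n,2}$, $f_{n,3}$, $f_{n,p}$) and the convolution Lemma \ref{theom1a}, which ultimately rests on the Ruscheweyh--Sheil-Small theorem that $\mathcal{K}*\mathcal{S}^*\subset\mathcal{S}^*$. You instead work directly at the coefficient level: the classical bound $|A_k|\le k$ for starlike functions, applied to $\mathcal{D}^m\phi$, gives $|a_k|\le k^{1-m}$, and then Silverman's criterion in the form of Theorem \ref{theomd2} (with $\alpha=0$) yields the three assertions, the borderline case being part (1) where equality $2^{m-1}|a_2|=1$ can occur but is still admissible in the criterion. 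Your route is more elementary in that it bypasses the convolution theorem entirely, it makes the role of the convergent series $\sum k^{-2}$ (the same $\pi^2/6-1<1$ computation that the paper hides inside Lemma \ref{theom5a}(3)) explicit, and it explains transparently why uniformity in $p$ only starts at $m\ge4$; the price is that it invokes the coefficient estimate $|A_k|\le k$ for starlike functions, which the paper never states (though it is classical and available in Duren \cite{23}), whereas the paper's proof stays entirely within the lemmas it has already set up and which it needs elsewhere anyway.
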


\begin{proof}
Let $\phi(z)= z+ a_2z^2+a_3z^3+\dots$, $z\in \mathbb{D}$. Then we can write $\phi_p $ as
\begin{align*}
\phi_p(z)&= z+ a_2z^2+a_3z^3+\dots+a_pz^p\\ &=\left(z+2^n a_2z^2+3^na_3z^3+\dots\right)*\bigg(z+\frac{z^2}{2^n}+ \frac{z^3}{3^n}+\dots+\frac{z^p}{p^n}\bigg)\\&= \mathcal{D}^n\phi(z)*f_{n,p}(z).
\end{align*}
Since the function  $ \phi\in \mathcal{S}_m$, \eqref{eq0} shows that the function $\mathcal{D}^n\phi \in \mathcal{S}_{m-n}$. Therefore, by using Lemma \ref{remak1b} and Lemma \ref{theom1a}, we get the result.
\end{proof}

Let the function $f=h +\bar{g}$ be a harmonic mapping, where \[h(z)=\sum_{k=1}^{\infty}a_kz^k\quad \text{and}\quad  g(z)=\sum_{k=1}^{\infty}b_kz^k.\] We define the $n{th}$ partial sum  of $f $ by\[f_n(z):=\sum_{k=1}^{n}a_kz^k+\overline{\sum_{k=1}^{n}b_kz^k}.\]
Therefore, we can write $f_n=f\tilde{\ast}l_n$, where $l_{n}(z)= \sum_{k=1}^{n}z^k$ is the $n{th}$ partial sum of right-half plane mapping  $l(z)=z/(1-z)$.

\begin{theorem}\label{theom8a}
For $n\in\mathbb{N}$, let the function $f=h +\bar{g}$ be a  harmonic mapping in $\mathbb{D}$ with  \begin{equation}\label{eq8b}
 h(z)-g(z)=\left(\frac{z}{(1-z)^2}\right)_*^{n-1}\quad\text{for z} \in\mathbb{D},
\end{equation}
  and
\begin{equation}\label{eq8c}
  \RE{\frac{(1-z)^2}{z}}\left\lbrace h(z)*\bigg(\log\frac{1}{1-z}\bigg)_*^{n-2}\right\rbrace> 1/2\quad\text{ for z}  \in \mathbb{D}.
\end{equation}
  Also, let the function $\phi\in \mathcal{S}_m$. Then, for the partial sum  $(f\tilde{\ast}\phi)_p$ of  the convolution $f\tilde{\ast}\phi$, we have the following:
\begin{itemize}
\item[(1)] If $m\geq n+2 $, then $(f\tilde{\ast}\phi)_2 \in \mathcal{S}_H^0$ and is convex in the the direction of real axis;
\item[(2)] If $m\geq n+3$, then $(f\tilde{\ast}\phi)_2$, $(f\tilde{\ast}\phi)_3 \in \mathcal{S}_H^0$ and are convex in the the direction of real axis;
\item[(3)] If $m\geq n+4$, then $(f\tilde{\ast}\phi)_p \in \mathcal{S}_H^0$ and is convex in the the direction of real axis for all $p\in\mathbb{N}.$
\end{itemize}
\end{theorem}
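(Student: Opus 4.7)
The plan is to reduce the assertion to Theorem~\ref{theom4a} applied to the partial sums $\phi_p$ of $\phi$ rather than to $\phi$ itself. The starting observation is that, by associativity of the Hadamard product, the partial-sum operator commutes with $\tilde{\ast}$:
\[
(f\tilde{\ast}\phi)_p \;=\; (f\tilde{\ast}\phi)\tilde{\ast}l_p \;=\; h\ast(\phi\ast l_p)+\overline{g\ast(\phi\ast l_p)} \;=\; f\tilde{\ast}\phi_p.
\]
Thus it suffices to show, under each of the three hypotheses on $m$, that $f\tilde{\ast}\phi_p\in\mathcal{S}_H^0$ and is convex in the direction of the real axis.

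Next, I would observe that the standing hypotheses \eqref{eq8b} and \eqref{eq8c} on $f$ are precisely the hypotheses of Theorem~\ref{theom4a} specialised to $\alpha=\gamma=1$ and $\beta=\delta=0$, so that the guaranteed direction of convexity $-\delta$ becomes the real axis. Hence, in order to apply Theorem~\ref{theom4a} with $\phi$ replaced by $\phi_p$, the only thing left to verify is the membership $\phi_p\in\mathcal{S}_n$.

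For this I would combine Lemma~\ref{theom7a} with the chain of inclusions $\mathcal{S}_{k+1}\subset\mathcal{S}_k$ for every $k\ge 0$, which follows from $f\in\mathcal{S}_{k+1}\Longleftrightarrow \mathcal{D}^k f\in\mathcal{K}\subset\mathcal{S}^{*}\Longleftrightarrow f\in\mathcal{S}_k$. Concretely, in part~(1) the bound $m\ge n+2$ gives $\phi_2\in\mathcal{S}_{m-2}\subset\mathcal{S}_n$; in part~(2) the bound $m\ge n+3$ gives both $\phi_2\in\mathcal{S}_{m-2}\subset\mathcal{S}_n$ and $\phi_3\in\mathcal{S}_{m-3}\subset\mathcal{S}_n$; and in part~(3) the bound $m\ge n+4$ gives $\phi_p\in\mathcal{S}_{m-4}\subset\mathcal{S}_n$ for every $p\in\mathbb{N}$. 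In each case Theorem~\ref{theom4a} then delivers $f\tilde{\ast}\phi_p\in\mathcal{S}_H^{0}$ convex in the direction of the real axis.

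I do not anticipate any serious obstacle; the argument is a mechanical assembly of the identity $(f\tilde{\ast}\phi)_p=f\tilde{\ast}\phi_p$, the partial-sum starlikeness results of Lemma~\ref{theom7a}, the nesting $\mathcal{S}_{k+1}\subset\mathcal{S}_k$, and Theorem~\ref{theom4a}. The only mild point of care is the boundary case $n=1$, where $(z/(1-z)^{2})_{\ast}^{n-2}$ must be interpreted in the inverse-convolution sense, exactly as in Remark~\ref{remak39}; once this convention is adopted, the same reduction applies verbatim, and the trivial case $p=1$ (where $\phi_1(z)=z$ and $f\tilde{\ast}\phi_1=z$) needs no separate treatment.
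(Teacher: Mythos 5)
Your proposal is correct and follows essentially the same route as the paper: the identity $(f\tilde{\ast}\phi)_p=f\tilde{\ast}\phi_p$, then Lemma~\ref{theom7a} (with the nesting $\mathcal{S}_{k+1}\subset\mathcal{S}_k$, which the paper leaves implicit) to get $\phi_p\in\mathcal{S}_n$, and finally Theorem~\ref{theom4a} with $\alpha=\gamma=1$, $\beta=\delta=0$. Your extra remarks on the $n=1$ and $p=1$ boundary cases are fine and only make explicit what the paper takes for granted.
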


\begin{proof}
We know that $(f\tilde{\ast}\phi)_p =(f\tilde{\ast}\phi)\tilde{\ast}l_p=f\tilde{\ast}(\phi*l_p)= f\tilde{\ast}\phi_p$, where $\phi_p$ is the  $p{th}$ partial sum of the function $\phi$. Therefore, in order to apply Theorem \ref{theom4a}, we need  $\phi_p$ to be in the class $\mathcal{S}_n$, which follows from Lemma \ref{theom7a}. The result now follows by Theorem \ref{theom4a}.
\end{proof}

\begin{corollary}
 For $a\geq6$, let the   function $ f_a=h+ \bar{g}$  be the harmonic mapping given in Example \ref{exam42}. Then, we have the following:
\begin{itemize}
\item[(1] If the function $\phi \in \mathcal{S}_3$, then  $(f_a\tilde{*}\phi)_2\in\mathcal{S}_H^0$ and is convex in the direction of real axis;
\item[(2] If the function $\phi \in \mathcal{S}_4$, then $(f_a\tilde{*}\phi)_2$, $(f_a\tilde{*}\phi)_3\in\mathcal{S}_H^0$ and are convex in the direction of real axis;
\item[(3] If the function $\phi \in \mathcal{S}_5$, then $(f_a\tilde{*}\phi)_p\in\mathcal{S}_H^0$ and is convex in the direction of real axis for all $p\in\mathbb{N}.$
\end{itemize}
\end{corollary}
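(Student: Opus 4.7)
The plan is to apply Theorem~\ref{theom8a} with $n=1$ to the harmonic mapping $f_a=h+\bar g$ from Example~\ref{exam42}, where $h(z)=(1+z/a)\,z/(1-z)$ and $g(z)=z^2/(a(1-z))$. The three cases $m\geq n+2,\ n+3,\ n+4$ of that theorem then specialize to $m=3,4,5$, producing the three assertions of the corollary in order.

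First, I would verify hypothesis~\eqref{eq8b} with $n=1$. Here $(z/(1-z)^2)_*^{n-1}=(z/(1-z)^2)_*^{0}=z/(1-z)$, the convolution identity. A direct subtraction gives
\[h(z)-g(z)=\Bigl(1+\frac{z}{a}\Bigr)\frac{z}{1-z}-\frac{z^2}{a(1-z)}=\frac{z}{1-z},\]
so~\eqref{eq8b} holds.

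Next, I would verify hypothesis~\eqref{eq8c} with $n=1$. Because $\log(1/(1-z))\,*\,z/(1-z)^2=z/(1-z)$, the convolution inverse $\bigl(\log(1/(1-z))\bigr)_*^{-1}$ is $z/(1-z)^2$. Convolution of $h$ with $z/(1-z)^2$ realizes the S\~{a}l\~{a}gean action $\mathcal{D}h(z)=zh'(z)$, so the required inequality reduces to
\[\RE\Bigl\{\frac{(1-z)^2}{z}\,\mathcal{D}h(z)\Bigr\}=\RE\bigl\{(1-z)^2 h'(z)\bigr\}>\tfrac12,\quad z\in\mathbb{D}.\]
This is exactly the estimate already carried out in the proof of Corollary~\ref{cor2a}: one expands $(1-z)^2 h'(z)=1+z(2-z)/a$ and checks that its real part exceeds $1/2$ on $\mathbb{D}$ for $a\geq 6$.

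With both hypotheses of Theorem~\ref{theom8a} now in place (for $n=1$), its parts (1), (2), (3) deliver precisely the three statements of the corollary: $m\geq 3$ yields (1), $m\geq 4$ yields (2), and $m\geq 5$ yields (3). There is essentially no conceptual obstacle; the only step requiring genuine work is the real-part inequality above, which is what forces the restriction $a\geq 6$ and which has already been dispatched in Corollary~\ref{cor2a}. The remainder of the argument is a matter of identifying the correct value $n=1$ and invoking Theorem~\ref{theom8a}.
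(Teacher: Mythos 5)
Your proposal is correct and follows essentially the same route as the paper: take $n=1$ in Theorem~\ref{theom8a}, note $h(z)-g(z)=z/(1-z)$, and reduce \eqref{eq8c} to $\RE\bigl\{(1-z)^2h'(z)\bigr\}=\RE\bigl(1+\tfrac{z}{a}(2-z)\bigr)>\tfrac12$ for $a\geq 6$, after which the three cases $m\geq 3,4,5$ give the three assertions. No gaps.
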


\begin{proof}
 We have $h(z)-g(z)=l(z)=z/(1-z)$. Also, for $a\geq6$ we have
\begin{align*}
 \RE\frac{(1-z)^2}{z}\left(h(z)*\left(\frac{z}{(1-z)^2}\right)\right) &=\RE(1-z)^2h'(z)\\&=\RE\left( 1+\frac{z}{a}(2-z)\right)>\frac{1}{2}.
\end{align*}
Therefore, the mapping  $f_a$ satisfies \eqref{eq8b} and \eqref{eq8c} with $n=1$. Hence, the result follows from the Theorem \ref{theom8a}.
\end{proof}

\begin{corollary}\label{corl14a}
 For the harmonic Koebe mapping $K= H+\overline{G}$ and the harmonic half-plane mapping $ L = M+ \overline{N}$, we have the following:
\begin{itemize}
\item[(1] If the function $\phi \in \mathcal{S}_4$, then $(K\tilde{*}\phi)_2$, $(L\tilde{*}\phi)_2\in\mathcal{S}_H^0$ and are convex in the direction of real axis;
\item[(2] If the function $\phi \in \mathcal{S}_5$, then $(K\tilde{*}\phi)_2$, $(K\tilde{*}\phi)_3$, $(L\tilde{*}\phi)_2$, $(L\tilde{*}\phi)_3\in\mathcal{S}_H^0$ and are convex in the direction of real axis;
\item[(3] If the function $\phi \in \mathcal{S}_6$, then $(K\tilde{*}\phi)_p$, $(L\tilde{*}\phi)_p\in\mathcal{S}_H^0$ and are convex in the direction of real axis for all $p\in\mathbb{N}$.
\end{itemize}
\end{corollary}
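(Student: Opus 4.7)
The plan is to reduce the corollary to Theorem~\ref{theom8a} applied with $n = 2$, separately for $f = K$ and $f = L$. With this choice of $n$, hypothesis \eqref{eq8b} becomes $h - g = z/(1-z)^2$, and hypothesis \eqref{eq8c} becomes $\RE[(1-z)^2 h(z)/z] > 1/2$, since $(\log 1/(1-z))_{*}^{0}$ is the convolution identity $z/(1-z)$ and convolving $h$ with it leaves $h$ unchanged. Once both hypotheses are verified for each of $K$ and $L$, the three cases $m \geq n+2$, $m \geq n+3$, $m \geq n+4$ of Theorem~\ref{theom8a} translate directly into the thresholds $m \geq 4, 5, 6$ stated in the corollary.

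Checking \eqref{eq8b} is a direct expansion: one finds $M - N = (z - z^2/2 + z^2/2)/(1-z)^2 = z/(1-z)^2$ and $H - G = (z - z^2)/(1-z)^3 = z/(1-z)^2$, so both mappings qualify.

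Checking \eqref{eq8c} is also immediate for $L$, because $(1-z)^2 M(z)/z = 1 - z/2$ clearly has real part greater than $1/2$ on $\mathbb{D}$. The only substantive step is the analogous estimate for $K$, which I expect to be the main obstacle. A short algebraic manipulation yields
\[
\frac{(1-z)^2 H(z)}{z} - \frac{1}{2} \;=\; \frac{3 + z^2}{6(1-z)},
\]
so the bound reduces to showing $\RE[(3 + z^2)/(1-z)] > 0$ throughout $\mathbb{D}$. I plan to handle this by multiplying numerator and denominator by $\overline{1-z}$, so that positivity of the real part is equivalent to $\RE[(3+z^2)(1-\bar z)] > 0$; setting $z = x + iy$, this expands to $(3 + x^2)(1-x) - y^2(1+x) > 0$, and invoking the disk constraint $y^2 < 1 - x^2$ reduces the problem to the trivial inequality $2(1-x)^2 > 0$, valid since $x < 1$ on $\mathbb{D}$.

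With \eqref{eq8b} and \eqref{eq8c} established for both $K$ and $L$, the three conclusions of the corollary follow at once by invoking parts (1), (2), (3) of Theorem~\ref{theom8a} in turn, with no further analysis needed.
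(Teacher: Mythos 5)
Your proposal is correct and follows essentially the same route as the paper: verify hypotheses \eqref{eq8b} and \eqref{eq8c} with $n=2$ for both $K$ and $L$, and then invoke parts (1)--(3) of Theorem \ref{theom8a} to get the thresholds $m\geq 4,5,6$. The only difference is cosmetic: for the harmonic Koebe mapping the paper checks $\RE\frac{1-z/2+z^2/6}{1-z}>\frac{1}{2}$ via the partial-fraction decomposition $\frac{2/3}{1-z}+\frac{1}{3}-\frac{z}{6}$ together with $\RE\frac{1}{1-z}>\frac{1}{2}$, whereas you establish the same inequality by an equivalent elementary real-variable estimate, which is also valid.
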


\begin{proof}
The harmonic Koebe mapping  $K(z)= H(z)+\overline{G(z)}$ is given by \[H(z)=\frac{z-z^2/2+z^3/6}{(1-z)^3} \quad\text{ and }\quad G(z)=\frac{z^2/2+z^3/6}{(1-z)^3}.\] Therefore, \[H(z)-G(z)=\frac{z}{(1-z)^2}\] and

\[\RE\frac{(1-z)^2}{z}H(z) =\RE\frac{1-z/2+z^2/6}{1-z} =\RE\bigg(\frac{2/3}{1-z} + \frac{1}{3} + \frac{z}{6}\bigg) >\frac{1}{2}.\]
Also, the half plane mapping $L(z)= M(z)+\overline{N(z)}$ is given by \[M(z)=\frac{z-z^2/2}{(1-z)^2}\quad \text{and} \quad N(z)=\frac{-z^2/2}{(1-z)^3}.\] Then, \[M(z)-N(z)=\frac{z}{(1-z)^2}\quad\text{and}\quad \RE\frac{(1-z)^2}{z}M(z) =\RE\left(1-\frac{z}{2}\right)>\frac{1}{2}.\]
Therefore, the functions, $K$ and $L$ satisfies \eqref{eq8b} and \eqref{eq8c} with $n=2$. Hence, the result follows from the Theorem \ref{theom8a}.
\end{proof}

\begin{example}
For $m\in\mathbb{N}$, we can easily see that the function \[f_m(z):= z+\frac{z^2}{2^m}+ \frac{z^3}{3^m}+\dots \in \mathcal{S}_{m+1}.\]
Therefore, by Corollary \ref{corl14a}, the following functions
\begin{itemize}
\item[(1)]$(K\tilde{*}f_3)_2(z) =z+\frac{5}{16}z^2 +\frac{1}{16}\bar{z}^2$;
\item[(2)]$(K\tilde{*}f_4)_3(z) =z+\frac{5}{32}z^2+\frac{14}{243}z^3 +\frac{1}{32}\bar{z}^2+\frac{5}{243}\bar{z}^3 $;
\item[(3)]$(K\tilde{*}f_5)_4(z) =z+\frac{5}{64}z^2+\frac{14}{729}z^3 +\frac{15}{2048}z^4+\frac{1}{64}\bar{z}^2+\frac{5}{729}\bar{z}^3+ \frac{7}{2048}\bar{z}^4$
\end{itemize}
 belongs to $\mathcal{S}_H^0$ and are convex in the direction of real axis.
\end{example}

\begin{corollary}
For $|b|\leq1/2$, let the function $ F_b=h+ \bar{g}$ be the harmonic mapping given in Example \ref{exam42a}. Then, we have the following:
\begin{itemize}
\item[(1] If the function $\phi \in \mathcal{S}_5$, then $(F_b\tilde{*}\phi)_2\in\mathcal{S}_H^0$ and is convex in the direction of real axis.
\item[(2] If the function $\phi \in \mathcal{S}_6$, then $(F_b\tilde{*}\phi)_2$, $(F_b\tilde{*}\phi)_3\in\mathcal{S}_H^0$ and are convex in the direction of real axis.
\item[(3] If the function $\phi \in \mathcal{S}_7$, then $(F_b\tilde{*}\phi)_p\in\mathcal{S}_H^0$ and are convex in the direction of real axis for all $p\in\mathbb{N}$.
\end{itemize}
\end{corollary}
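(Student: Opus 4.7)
The plan is to identify the harmonic mapping $F_b$ as an instance of the general setup of Theorem \ref{theom8a} with $n=3$, and then simply invoke that theorem. Concretely, I would show that $F_b=h+\bar g$ satisfies both hypotheses \eqref{eq8b} and \eqref{eq8c} of Theorem \ref{theom8a} with the choice $n=3$; the three parts of the corollary then correspond exactly to $m\geq n+2=5$, $m\geq n+3=6$, and $m\geq n+4=7$.

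First, I would verify condition \eqref{eq8b}. From Example \ref{exam42a} we already know
\[
h(z)-g(z)=\frac{z+z^2}{(1-z)^3}.
\]
On the other hand, using $z/(1-z)^2=\sum_{k=1}^\infty k z^k$, a direct Hadamard product gives
\[
\left(\frac{z}{(1-z)^2}\right)_{\!*}^{\!2} \;=\; \sum_{k=1}^\infty k^2 z^k \;=\; \frac{z+z^2}{(1-z)^3},
\]
so that $h(z)-g(z)=\bigl(z/(1-z)^2\bigr)_*^{\,n-1}$ with $n=3$, as required.

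Next, I would verify condition \eqref{eq8c} with $n=3$, which asks for
\[
\RE\frac{(1-z)^2}{z}\Bigl(h(z)*\log\tfrac{1}{1-z}\Bigr)>\tfrac{1}{2},\qquad z\in\mathbb{D}.
\]
But this is precisely the inequality that was computed inside the proof of Corollary \ref{corl3a}: using $h(z)=\mathcal{D}\bigl((z+bz^2)/(1-z)^2\bigr)$ and the identity $\mathcal{D}\psi*\log\frac{1}{1-z}=\psi$, one obtains
\[
\frac{(1-z)^2}{z}\Bigl(h(z)*\log\tfrac{1}{1-z}\Bigr)=\frac{(1-z)^2}{z}\cdot\frac{z+bz^2}{(1-z)^2}=1+bz,
\]
and $\RE(1+bz)>1/2$ for all $z\in\mathbb{D}$ whenever $|b|\leq 1/2$. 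So \eqref{eq8c} holds.

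With both hypotheses in hand, the conclusion follows immediately by applying Theorem \ref{theom8a} with $n=3$: part (1) of that theorem (with $m\geq 5$) gives (1) of the corollary, part (2) (with $m\geq 6$) gives (2), and part (3) (with $m\geq 7$) gives (3). There is no real obstacle here; the only point requiring a short computation is the convolution identity $(z/(1-z)^2)_*^2=(z+z^2)/(1-z)^3$, and everything else is simply a re-reading of Example \ref{exam42a} and Corollary \ref{corl3a} through the lens of Theorem \ref{theom8a}.
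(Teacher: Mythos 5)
Your proposal is correct and follows essentially the same route as the paper: verify \eqref{eq8b} via $(z/(1-z)^2)_*^2=(z+z^2)/(1-z)^3$, verify \eqref{eq8c} via $h=\mathcal{D}\bigl((z+bz^2)/(1-z)^2\bigr)$ so that $h*\log\frac{1}{1-z}=(z+bz^2)/(1-z)^2$ and $\RE(1+bz)>1/2$ for $|b|\leq 1/2$, and then apply Theorem \ref{theom8a} with $n=3$ to get the three cases $m\geq 5,6,7$. No gaps.
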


\begin{proof}
 We have \[h(z)-g(z)= \frac{z+z^2}{(1-z)^3} = \left(\frac{z}{(1-z)^2}\right)_*^{2},\] and  \[ \mathcal{D} \left(\frac{z+bz^2}{(1-z)^2}\right)  =\frac{z+(1+2b)z^2}{(1-z)^3}= h(z).\] Using above equation, we get, for $|b|\leq1/2$,
\begin{align*}
 \RE\frac{(1-z)^2}{z}\left(h(z)*\log\frac{1}{(1-z)}\right) &=\RE\frac{(1-z)^2}{z}\left( \mathcal{D} \frac{z+bz^2}{(1-z)^2} *\log\frac{1}{(1-z)}\right)\\&=\RE\frac{(1-z)^2}{z} \frac{z+bz^2}{(1-z)^2}\\ &=\RE {(1+bz)} > 1/2.
\end{align*}
Therefore, the function $F_b$ satisfies \eqref{eq8b} and \eqref{eq8c} with $n=3$. Hence, the result follows from the Theorem \ref{theom8a}.
\end{proof}

\begin{theorem}
For  the mapping $f=h+\bar{g} \in\mathcal{S}^0(H_\alpha)$,
 we have the following:
\begin{itemize}
\item[(1] If the function $\phi \in \mathcal{S}_3$, then $(f\tilde{*}\phi)_2\in \mathcal{S}_H^0$ and is convex in the direction $(\pi/2-\alpha).$
\item[(2] If the function $\phi \in \mathcal{S}_4$, then $(f\tilde{*}\phi)_2$, $(f\tilde{*}\phi)_3\in \mathcal{S}_H^0$ and are convex in the direction $(\pi/2-\alpha).$
\item[(3] If the function $\phi \in \mathcal{S}_5$, then $(f\tilde{*}\phi)_p\in \mathcal{S}_H^0$ and are convex in the direction $(\pi/2-\alpha)$ for all $p\in\mathbb{N}.$
\end{itemize}
\end{theorem}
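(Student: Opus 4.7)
The plan is to follow the pattern established in the proof of Theorem \ref{theom8a}, reducing the partial-sum statement to a direct application of Theorem \ref{theom9a}. The key observation is that taking the $p$-th partial sum of a convolution equals the convolution with a partial sum: since $l_p(z)=\sum_{k=1}^p z^k$ is the identity with respect to truncation, we have
\[
(f\tilde{*}\phi)_p=(f\tilde{*}\phi)\tilde{*}\,l_p=f\tilde{*}(\phi*l_p)=f\tilde{*}\phi_p,
\]
where $\phi_p$ denotes the $p$-th partial sum of $\phi$. Thus the problem becomes: for which classes of $\phi$ is each $\phi_p$ convex, so that Theorem \ref{theom9a} applies to $f\tilde{*}\phi_p$?

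Next I would invoke Lemma \ref{theom7a}, which controls exactly how starlikeness of order $n$ for $\phi$ degrades when passing to partial sums. For part (1), $\phi\in\mathcal{S}_3$ gives $\phi_2\in\mathcal{S}_1=\mathcal{K}$. For part (2), $\phi\in\mathcal{S}_4$ gives $\phi_2\in\mathcal{S}_2$ and $\phi_3\in\mathcal{S}_1=\mathcal{K}$; invoking the elementary inclusion $\mathcal{S}_n\subset\mathcal{S}_{n-1}$ (which follows from \eqref{eq0} together with $\mathcal{K}\subset\mathcal{S}^*$), we get $\mathcal{S}_2\subset\mathcal{K}$, so both $\phi_2,\phi_3\in\mathcal{K}$. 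For part (3), $\phi\in\mathcal{S}_5$ gives $\phi_p\in\mathcal{S}_1=\mathcal{K}$ for every $p\in\mathbb{N}$.

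With $\phi_p\in\mathcal{K}$ in hand and $f\in\mathcal{S}^0(H_\alpha)$ by hypothesis, Theorem \ref{theom9a} immediately yields $f\tilde{*}\phi_p\in\mathcal{S}_H^0$ and convex in the direction $\pi/2-\alpha$. Combining with the identity $(f\tilde{*}\phi)_p=f\tilde{*}\phi_p$ from the first step finishes the proof in all three cases.

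There is no real obstacle here; the argument is essentially a bookkeeping exercise, with the substantive work already absorbed into Lemma \ref{theom7a} (for the partial-sum regularity) and Theorem \ref{theom9a} (for the directional convexity of the convolution). The one point that warrants a brief sentence in the write-up is the inclusion $\mathcal{S}_2\subset\mathcal{K}$ needed for part (2), which is what lets both $\phi_2$ and $\phi_3$ be treated uniformly as convex mappings.
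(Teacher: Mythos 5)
Your proposal is correct and follows essentially the same route as the paper: rewrite $(f\tilde{*}\phi)_p=f\tilde{*}\phi_p$, use Lemma \ref{theom7a} to place each partial sum $\phi_p$ in $\mathcal{K}$, and then apply Theorem \ref{theom9a}. Your explicit remark that $\mathcal{S}_2\subset\mathcal{K}$ (needed for $\phi_2$ in part (2)) is a detail the paper leaves implicit, but it is a correct and welcome clarification rather than a different argument.
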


 \begin{proof}
Since $(f\tilde{\ast}\phi)_p =f\tilde{\ast}\phi_p$, where $\phi_p$ is the  $p{th}$ partial sum of the function $\phi$, therefore, in order to apply Theorem \ref{theom9a},  we need the function $\phi_p$ to be in the class $\mathcal{K}$, which follows from  Lemma \ref{theom7a}. The result now follows from Theorem \ref{theom9a}.
 \end{proof}

\begin{theorem}
 For the harmonic mapping  $f_{\alpha}$  defined in the Example \ref{exam5}, we have the following:
\begin{itemize}
\item[(1] If the function $\phi \in \mathcal{S}_4$, then $(f_{\alpha}\tilde{*}\phi)_2\in \mathcal{S}_H^0$ and is convex in the directions $-\alpha$ and $(\pi/2-\alpha).$
\item[(2] If the function  $\phi \in \mathcal{S}_5$, then $(f_{\alpha}\tilde{*}\phi)_2$, $(f_{\alpha}\tilde{*}\phi)_3\in \mathcal{S}_H^0$  and are convex in the directions $-\alpha$ and $(\pi/2-\alpha).$
\item[(3] If the function $\phi \in \mathcal{S}_6$, then $(f_{\alpha}\tilde{*}\phi)_p\in \mathcal{S}_H^0$ and is convex in the directions $-\alpha$ and $(\pi/2-\alpha)$ for all $p\in\mathbb{N}$.
\end{itemize}
\end{theorem}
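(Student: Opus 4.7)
The plan is to reduce the theorem to Theorem~\ref{theomab} via the identity $(f_\alpha\tilde{\ast}\phi)_p = f_\alpha\tilde{\ast}\phi_p$, where $\phi_p$ is the $p$th partial sum of $\phi$, and then use Lemma~\ref{theom7a} to identify when $\phi_p$ belongs to the class $\mathcal{S}_2$ required by Theorem~\ref{theomab}.

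First I would note that since $l_p(z)=\sum_{k=1}^{p} z^k$ is the identity on polynomials of degree $\le p$, we have $(f_\alpha\tilde{\ast}\phi)_p = (f_\alpha\tilde{\ast}\phi)\tilde{\ast} l_p = f_\alpha\tilde{\ast}(\phi * l_p) = f_\alpha\tilde{\ast}\phi_p$; this is exactly the reduction used in the proofs of Theorem~\ref{theom8a} and the preceding theorem. Thus it suffices to show that in each of the three cases the partial sum $\phi_p$ belongs to $\mathcal{S}_2$, for then Theorem~\ref{theomab} immediately gives $f_\alpha\tilde{\ast}\phi_p\in\mathcal{S}_H^0$ and convexity in the two perpendicular directions $-\alpha$ and $\pi/2-\alpha$.

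Next I would apply Lemma~\ref{theom7a} in each case. For part (1), since $\phi\in\mathcal{S}_4$ and $p=2$, item~(1) of that lemma with $m=4$ gives $\phi_2\in\mathcal{S}_{4-2}=\mathcal{S}_2$. For part (2), with $\phi\in\mathcal{S}_5$, item~(1) with $m=5$ gives $\phi_2\in\mathcal{S}_3\subset\mathcal{S}_2$, and item~(2) with $m=5$ gives $\phi_3\in\mathcal{S}_{5-3}=\mathcal{S}_2$. For part (3), with $\phi\in\mathcal{S}_6$, item~(3) with $m=6$ gives $\phi_p\in\mathcal{S}_{6-4}=\mathcal{S}_2$ for every $p\in\mathbb{N}$. (The inclusion $\mathcal{S}_n\subset\mathcal{S}_{n-1}$ used above follows from \eqref{eq0} together with the fact that $\mathcal{K}\subset\mathcal{S}^*$.)

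There is no real obstacle here beyond bookkeeping: the whole argument is a packaging of Theorem~\ref{theomab} with the partial-sum/starlikeness transfer encoded in Lemma~\ref{theom7a}, exactly parallel to the way Theorem~\ref{theom8a} was deduced from Theorem~\ref{theom4a}. The only mild point worth stating explicitly is the membership $\mathcal{S}_{m-k}\subset\mathcal{S}_2$ for the indices $m,k$ occurring in parts (1) and (2), which is used to ensure that the hypothesis of Theorem~\ref{theomab} holds and not merely a stronger version of it.
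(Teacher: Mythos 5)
Your proposal is correct and follows essentially the same route as the paper: reduce via $(f_{\alpha}\tilde{\ast}\phi)_p=f_{\alpha}\tilde{\ast}\phi_p$, use Lemma~\ref{theom7a} to place $\phi_p$ in $\mathcal{S}_2$, and invoke Theorem~\ref{theomab}. Your explicit remark that $\mathcal{S}_3\subset\mathcal{S}_2$ (needed for $\phi_2$ when $\phi\in\mathcal{S}_5$) is a detail the paper leaves implicit, and it is a correct and welcome addition.
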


 \begin{proof}
Since $(f_{\alpha}\tilde{\ast}\phi)_p= f_{\alpha}\tilde{\ast}\phi_p$, where $\phi_p$ is the  $p{th}$ partial sum of the function $\phi$, therefore, in order to apply Theorem \ref{theomab},  we need the function $\phi_p$ to be in the class $\mathcal{S}_2$, which follows from Lemma \ref{theom7a}. The results now follows from Theorem \ref{theomab}.
 \end{proof}

\end{document}